 \newtheorem{theorem}{\sc\bf Theorem}[section]
 \newtheorem{corollary}[theorem]{\sc\bf Corollary}
 \newtheorem{proposition}[theorem]{\sc\bf Proposition}
 \newtheorem{definition}[theorem]{\sc\bf Definition}
 \newtheorem{remark}[theorem]{\sc\bf Remark}
 \newtheorem{question}[theorem]{\sc\bf Question}
  \numberwithin{equation}{section}
\def\@cite#1#2{#1\if@tempswa , #2\fi}
\title{{\bf On the spectral identities and fundamental properties of one-sided Drazin inverses in Banach algebras}\thanks{This work has been supported by the Natural Science Foundation of Fujian Province, China (Grant No. 2022J01104).}}
\author {Kai \textsc{Yan}~\thanks{E-mail address: yklolxj@163.com}
\\ \small (School of Mathematics and Statistics, Fuzhou University,
Fuzhou
350108,\\ \small P.R. China)\\
}
\begin{document}

\date{}
\maketitle

\large

\linespread{1.06}

\begin{quote}
{\bf Abstract.} We establish several fundamental properties of one-sided (generalized) Drazin inverses in Banach algebras, including intertwining properties and reverse order laws. In particular, we introduce the concepts of one-sided strongly $\pi$-regularity, which is shown to be equivalent to one-sided Drazin invertibility. By utilizing the Jacobson\textquoteright s lemma for one-sided regularity, we prove the Jacobson\textquoteright s lemma for one-sided (generalized) Drazin invertibility. These results allow us to derive the spectral identities for one-sided (generalized) Drazin invertible spectra in Banach algebras, as well as the spectral identities for Fredholm type operators acting on Banach spaces.
\\
{\bf Mathematics Subject Classification.} 15A09, 16U99, 47A05.\\
{\bf Key words.} One-sided Drazin invertibility, Jacobson\textquoteright s lemma, spectral identity.
\end{quote}

\section{Introduction and preliminary}

The concepts of Drazin inverse and generalized Drazin inverse were originally introduced by Drazin in 1958 and Koliha in 1996, respectively. Throughout this paper, $\mathcal{S}$, $\mathcal{R}$ and $\mathcal{A}$ denote a semigroup, a ring and a Banach algebra, respectively. An element $a\in \mathcal{S}$ is said to be \emph{Drazin invertible} if there exist an element $x\in \mathcal{S}$ and an integer $j$ such that
$$ax=xa,~~x^2a=x,~~a^{j+1}x=a^j.$$
The minimal non-negative integer $j$ is referred to as the Drazin index of $a$. The Drazin inverse has found applications in the theory of finite Markov chains, linear systems of differential equations and many other fields of mathematics, see [\cite{Campbell}]. By extending nilpotent to quasi-nilpotent, Koliha [\cite{Koliha1}] introduced the generalized Drazin inverse in Banach algebras. An element $a\in \mathcal{A}$ is said to be \emph{generalized Drazin invertible} if there exists an element $x\in \mathcal{A}$ such that
$$ax=xa,~~x^2a=x,~~a-axa\in \mathcal{A}^{qnil}.$$
where $\mathcal{A}^{qnil}$ denotes all the quasi-nilpotent elements in $\mathcal{A}$. In this paper, $\sigma(a)$ and $r(a)$ denote the spectrum and the spectral radius of $a\in \mathcal{A}$, respectively. The set of all quasi-nilpotent elements in Banach algebra $\mathcal{A}$ is defined by $\mathcal{A}^{qnil}=\{a\in\mathcal{A}: r(a)=0\}.$
 Using the classical Gelfand\textquoteright s representation theorem, we have $r(ab)\leq r(a)r(b)$ when $a, b$ are commuting elements in $\mathcal{A}$.

An element $a\in \mathcal{R}$ is called \textit{strongly $\pi$-regular} if there exist elements $x,y\in \mathcal{R}$ and integers $p, q$ such that
$$xa^{p+1}=a^p \makebox{~and~} a^{q+1}y=a^q,$$
see [\cite{Azu}]. This definition can be restated as follows: $a^n\in a^{n+1}\mathcal{R}\cap\mathcal{R}a^{n+1}$ for some integer $n\geq 1$. In [\cite{Azu}], Azumaya proved that $a\in \mathcal{R}$ is strongly $\pi$-regular if and only if there exists an element $b\in \mathcal{R}$ such that $ab=ba$ and $a^n=a^{n+1}b$ for some integer $n\geq 1$. Drazin [\cite{Drazin}] proved the following result (Theorem \ref{6}), which Lam and Nielsen referred to as the \textquotedblleft Azumaya realization\textquotedblright ~in [\cite{Lam}]. The \textquotedblleft Azumaya realization\textquotedblright~ plays a significant role in studying Jacobson\textquoteright s lemma for Drazin inverse. One can check [\cite{Lam}, \cite{Yan2}] for more details.

\begin{theorem} \label{6}\rm{([\cite{Drazin}])} Let $\mathcal{R}$ be a ring and $a\in \mathcal{R}$. Then $a$ is strongly $\pi$-regular if and only if $a$ is Drazin invertible.
\end{theorem}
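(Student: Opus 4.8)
The plan is to establish the equivalence by proving the two implications separately, treating the passage from Drazin invertibility to strong $\pi$-regularity as the routine direction and the converse as the substantive one. For the implication that a Drazin invertible element is strongly $\pi$-regular, suppose $x$ is a Drazin inverse of $a$, so that $ax=xa$, $x^2a=x$ and $a^{j+1}x=a^j$ for some integer $j$. The key observation is that the commutativity $ax=xa$ upgrades $a^{j+1}x=a^j$ to the pair of relations $a^{j+1}x=a^j$ and $xa^{j+1}=a^j$ simultaneously; hence $a^j\in a^{j+1}\mathcal{R}\cap\mathcal{R}a^{j+1}$, which is exactly the restated definition of strong $\pi$-regularity with $n=j$. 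This direction demands nothing beyond bookkeeping.

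For the converse, rather than argue directly from the two-sided divisibility condition I would invoke Azumaya's characterization recalled in the excerpt: there exist an element $b\in\mathcal{R}$ with $ab=ba$ and an integer $n\geq 1$ such that $a^n=a^{n+1}b$. The strategy from here is to build a Drazin inverse explicitly. The first step is to promote the single relation $a^n=a^{n+1}b$ to the iterated family $a^n=a^{n+k}b^k$ for every $k\geq 0$, which I would prove by induction on $k$, using at each stage the relation $a^m=a^{m+1}b$ valid for all $m\geq n$ (obtained by left-multiplying $a^n=a^{n+1}b$ by $a^{m-n}$). Specializing to $k=n$ yields the crucial identity $a^n=a^{2n}b^n$, whose immediate consequence is that $e:=a^nb^n$ is idempotent, since $e^2=a^{2n}b^{2n}=(a^{2n}b^n)b^n=a^nb^n=e$.

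The final step is to set $x:=a^nb^{n+1}$ and verify the three defining conditions. Commutativity $ax=xa$ is immediate because $a$ and $b$ commute, so both sides equal $a^{n+1}b^{n+1}$. For $x^2a=x$ I would reduce $a^{2n+1}b^{2n+2}$ by inserting the block identity $a^{2n}b^n=a^n$ and then the relation $a^{n+1}b=a^n$, collapsing it to $a^nb^{n+1}=x$. For the third condition I would take $j=n$ and similarly collapse $a^{n+1}x=a^{2n+1}b^{n+1}$ using $a^{2n}b^n=a^n$ followed by $a^{n+1}b=a^n$ to reach $a^n$, thereby also showing the Drazin index is at most $n$. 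The main obstacle is purely computational, namely guessing the correct candidate $x=a^nb^{n+1}$ and organizing the manipulations so that every simplification is an application of either $a^n=a^{n+1}b$, which lowers a power of $b$, or $a^n=a^{2n}b^n$, which absorbs an excess block of $a$'s; once the iterated identity is secured these verifications become routine.
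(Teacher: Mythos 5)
Your proof is correct, but there is nothing in the paper to compare it against: Theorem \ref{6} is stated as a quoted classical result of Drazin (the ``Azumaya realization,'' in Lam--Nielsen's terminology) and the paper gives no proof of it. Your route is essentially the classical one. The forward direction is indeed pure bookkeeping: commutativity turns $a^{j+1}x=a^j$ into $xa^{j+1}=a^{j+1}x=a^j$, so $a^j\in a^{j+1}\mathcal{R}\cap\mathcal{R}a^{j+1}$ (if $j=0$, multiply both relations by $a$ once to meet the $n\geq 1$ requirement in the restated definition). For the converse you are entitled to Azumaya's characterization, since the paper recalls it explicitly just before the theorem and it predates Drazin's result, so there is no circularity; the content you add on top of it --- the iterated identity $a^n=a^{n+k}b^k$, its specialization $a^n=a^{2n}b^n$, and the explicit candidate $x=a^nb^{n+1}$ --- is precisely the realization step the theorem's nickname refers to, and all three verifications check out: $ax=xa=a^{n+1}b^{n+1}$; $x^2a=a^{2n+1}b^{2n+2}=a\,(a^{2n}b^n)\,b^{n+2}=a^{n+1}b^{n+2}=(a^{n+1}b)\,b^{n+1}=x$; and $a^{n+1}x=a^{2n+1}b^{n+1}=a\,(a^{2n}b^n)\,b=a^{n+1}b=a^n$, giving Drazin index at most $n$. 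It is worth contrasting your construction with the paper's proof of the one-sided analogue (Theorem \ref{1}): there no commuting witness is available, so the inverse must be manufactured as $c=x^{p+1}a^p$ directly from the one-sided inner inverse $x$, whereas your formula $a^nb^{n+1}$ leans essentially on $ab=ba$ and therefore works only in the two-sided setting that Azumaya's lemma provides.
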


Recently, several researchers consider the one-sided analogous of Drazin inverse in Banach algebras and rings, see [\cite{Berkani}, \cite{Ren}, \cite{Yan1}]. In [\cite{Berkani}], Berkani defined the one-side Drazin invertibility by utilizing the concept of p-invertibility, while Ren and Jiang [\cite{Ren}] defined the one-side Drazin invertibility in terms of annihilator. These two kinds of definitions are equivalent. In this paper, we primarily employ the notions of one-side Drazin invertibility introduced in [\cite{Yan1}], as the concepts in [\cite{Yan1}] impose fewer restrictions and are therefore easier to handle.

\begin{definition} \rm{([\cite{Yan1}])} \emph{An element $a\in \mathcal{A}$ is called left Drazin invertible with index $j$, if there exists an element $x\in \mathcal{A}$ such that
$$axa=xa^2, ~~x^2a=x,~~xa^{j+1}=a^j.$$
Such $x$ is called the left Drazin inverse of $a$. The minimal non-negative integer $j$ is referred to the left Drazin index of $a$. Dually, an element $a\in \mathcal{A}$ is called right Drazin invertible with index $j$, if there exists an element $y\in \mathcal{A}$ such that
$$aya=a^2y, ~~ay^2=y, ~~a^{j+1}y=a^j.$$
Such $y$ is called the right Drazin inverse of $a$. The minimal non-negative integer $j$ is referred to the right Drazin index of $a$.
}
\end{definition}

On taking left (resp. right) Drazin index $j=1$, we can give the definitions of \emph{left~(resp.~right)~group~invertibility} in Banach algebras. Next, we introduce the notions of one-sided generalized Drazin invertible in Banach algebras.

\begin{definition} \rm{([\cite{Yan1}])} \emph{An element $a\in \mathcal{A}$ is called left generalized Drazin invertible, if there exists an element $x\in \mathcal{A}$ such that
$$axa=xa^2, ~~x^2a=x, ~~axa-a\in \mathcal{A}^{qnil}.$$
Such $x$ is called the left generalized Drazin inverse of $a$. Dually, an element $a\in \mathcal{A}$ is called right generalized Drazin invertible, if there exists an element $y\in \mathcal{A}$ such that
$$aya=a^2y, ~~ay^2=y, ~~aya-a\in \mathcal{A}^{qnil}.$$
Such $y$ is called the right generalized Drazin inverse of $a$.
}
\end{definition}

In [\cite{Yan1}], the author demonstrated the relation between (generalized) Drazin invertibility and one-sided (generalized) Drazin invertibility.

\begin{proposition} \label{1.4} \rm{([\cite{Yan1}])} Let $a\in\mathcal{A}$. Then

\rm{(i)} $a$ is both left and right Drazin invertible if and only if $a$ is Drazin invertible. Specifically, if $x$ is a left Drazin inverse of $a$ with index $j$, and $y$ is a right Drazin inverse of $a$ with index $k$, then $x=y$ and $k=j$.

\rm{(ii)} $a$ is both left and right generalized Drazin invertible if and only if $a$ is generalized Drazin invertible. Specifically, if $x$ is a left generalized Drazin inverse of $a$ and $y$ is a right generalized Drazin inverse of $a$, then $x=y$.
\end{proposition}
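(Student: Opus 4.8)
The plan is to prove the two ``easy'' implications by direct substitution and to put the real work into the two converses, where one must show that the left and right inverses are forced to coincide.

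First I would dispatch ``(generalized) Drazin invertible $\Rightarrow$ left and right (generalized) Drazin invertible.'' If $x$ is a Drazin inverse of $a$, then $ax=xa$ converts $xa^{2}$ into $axa$ and $a^{j+1}x$ into $xa^{j+1}$, while $ax^{2}=x^{2}a=x$; thus $x$ is at once a left and a right Drazin inverse, with the same index. The generalized case is identical once one notes that $\mathcal{A}^{qnil}$ is closed under negation, so that $a-axa\in\mathcal{A}^{qnil}$ and $axa-a\in\mathcal{A}^{qnil}$ are interchangeable.

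For the converse in (i), let $x$ be a left Drazin inverse of index $j$ and $y$ a right Drazin inverse of index $k$. From $axa=xa^{2}$ and $x^{2}a=x$ I would first record the exact power identities
\begin{equation*}
x^{m+1}a=x^{m},\quad x=x^{m+1}a^{m},\quad x^{m}a^{m}=xa=:e,\quad xa^{n+1}=a^{n}\ (n\ge j),
\end{equation*}
and dually from $aya=a^{2}y$ and $ay^{2}=y$,
\begin{equation*}
ay^{m+1}=y^{m},\quad y=a^{m}y^{m+1},\quad a^{m}y^{m}=ay=:f,\quad a^{n+1}y=a^{n}\ (n\ge k),
\end{equation*}
where $e,f$ are idempotents. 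The decisive step is a telescoping substitution: for $n\ge k$,
\begin{equation*}
x=x^{n+1}a^{n}=x^{n+1}(a^{n+1}y)=(x^{n+1}a^{n+1})y=ey=x(ay)=xf,
\end{equation*}
while the symmetric computation gives $y=(xa^{m+1})y^{m+1}=x(a^{m+1}y^{m+1})=xf$ for $m\ge j$. Hence $x=xf=y=:z$, which also yields uniqueness of the one-sided inverses.

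It remains to check that $z$ is a genuine Drazin inverse. From $axa=xa^{2}$ and $aya=a^{2}y$ (now with $x=y=z$) one gets $za^{2}=aza=a^{2}z$. Writing $e=za$ and $f=az$, a short computation gives $ef=z(a^{2}z)=z(aza)=(za)(za)=e$ and $fe=a(z^{2}a)=az=f$; hence $g:=e-f$ satisfies $gf=ef-f^{2}=e-f=g$ and $ga=za^{2}-aza=0$, so $g=gf=g(az)=(ga)z=0$, that is $az=za$. With commutativity in hand, $z^{2}a=z$ and $a^{k+1}z=a^{k}$ exhibit $z$ as the Drazin inverse of $a$; and since $za^{n+1}=a^{n}$ and $a^{n+1}z=a^{n}$ are now the same relation, the minimal admissible $n$ agrees on the two sides, giving $k=j$.

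Part (ii) follows the same skeleton, but the exact identities $a^{n}=a^{n+1}y$ are no longer available, and this is the main obstacle: the telescoping substitution now leaves a quasinilpotent remainder that does not cancel algebraically, so I would switch to a spectral argument. The elements $e=xa$ and $f=ay$ are still idempotent, they now commute with $a$ (by $axa=xa^{2}$, resp.\ $aya=a^{2}y$), and $a(1-e)=-(axa-a)$, $a(1-f)=-(aya-a)$ lie in $\mathcal{A}^{qnil}$. Since $(x+1-e)(a+1-e)=1+a(1-e)$ is invertible, $a+1-e$ is left invertible, and splitting along the commuting idempotent $e$ shows that $ae$ is left invertible in the corner $e\mathcal{A}e$; dually $af$ is right invertible in $f\mathcal{A}f$. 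Using that one-sided invertibility is an open condition, the left data make $a-\lambda$ left invertible for small $\lambda\neq0$ and the right data make it right invertible there, so $a-\lambda$ is invertible on a punctured neighbourhood of $0$; thus $0$ is not an accumulation point of $\sigma(a)$ and $a$ is generalized Drazin invertible. Finally, as $\sigma(a)=\sigma(ae)$ (a commuting quasinilpotent perturbation does not change the spectrum), the point $0$ would be isolated in the spectrum of $ae$ in $e\mathcal{A}e$ if present, and being a boundary point it would make $ae$ non-left-invertible there; this contradiction shows $ae$ is invertible in $e\mathcal{A}e$. By uniqueness of the spectral idempotent $a^{\pi}$ this forces $e=f=1-a^{\pi}$, whereupon $x$ and $y$ both equal the inverse of $ae=af$ in that corner, namely the generalized Drazin inverse of $a$. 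The genuine difficulty is exactly this upgrade from one-sided corner-invertibility to two-sided invertibility, which in part (i) was handled purely algebraically by the nilpotent power identities.
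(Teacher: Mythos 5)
Your proof is correct, but it follows a genuinely different route from the paper's. The paper never proves Proposition \ref{1.4} from scratch (it is quoted from [\cite{Yan1}]); its own machinery recovers it in two separate pieces. For the equivalence in (i), the paper goes through one-sided strong $\pi$-regularity: left (right) Drazin invertibility equals left (right) strong $\pi$-regularity (Theorem \ref{1}), being both left and right strongly $\pi$-regular evidently gives strong $\pi$-regularity, and Drazin's ``Azumaya realization'' (Theorem \ref{6}) then yields Drazin invertibility --- as the paper puts it, ``without any computation''. The coincidence statements $x=y$ are, in the paper's framework, the special case $b=a$, $z=1$ of the intertwining Proposition \ref{24}, whose proof is analytic (a spectral-radius estimate showing $\|x-y\|^{1/n}\to 0$). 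Your part (i) replaces all of this by a self-contained, purely ring-theoretic argument: the telescoping identities $x=x^{n+1}a^{n}$ and $a^{n}=a^{n+1}y$ force $x=xf=y$, and the idempotent computation with $g=e-f$ (using $gf=g$, $ga=0$) upgrades the common element to a commuting, genuine Drazin inverse. This is more elementary (no norms) and valid in arbitrary rings, matching the generality of Theorem \ref{1}. Your part (ii), for which the paper gives no proof at all, is a sound spectral argument: corner-wise one-sided invertibility plus openness makes $a-\lambda$ invertible on a punctured neighbourhood of $0$, Koliha's characterization gives generalized Drazin invertibility, and uniqueness of the spectral idempotent forces $xa=ay=1-a^{\pi}$. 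It is worth noting that the paper's Proposition \ref{24}(ii) (with $b=a$, $z=1$) hands you $x=y$ in one line, after which your own idempotent trick from part (i), applied with $x=y$ (so that $xa^{2}=axa=a^{2}x$, $x^{2}a=x=ax^{2}$), yields $ax=xa$ algebraically and hence the generalized Drazin inverse --- a noticeably shorter proof of (ii); conversely, your spectral route has the merit of not depending on that analytic coincidence lemma.

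Two details in (ii) deserve to be spelled out. First, when you conclude that $x$ and $y$ ``both equal the inverse of $ae=af$ in that corner'': a priori $x$ need not lie in $e\mathcal{A}e$ (you know $xe=x^{2}a=x$ but not $ex=x$), so argue $x(ae)=(xa)e=e$ and hence $x=xe=x\bigl((ae)w\bigr)=\bigl(x(ae)\bigr)w=ew=w$, where $w$ is the corner inverse of $ae$; dually $y$ equals the corner inverse of $af$. Second, the identity $\sigma(a)=\sigma(ae)$ concerns spectra computed in $\mathcal{A}$, while your boundary-point contradiction takes place in $e\mathcal{A}e$; the bridge is that $e$ commutes with $a-\lambda$ and hence with $(a-\lambda)^{-1}$, so invertibility of $a-\lambda$ in $\mathcal{A}$ for small $\lambda\neq 0$ compresses to invertibility of $ae-\lambda e$ in $e\mathcal{A}e$, which is exactly what isolates $0$ in $\sigma_{e\mathcal{A}e}(ae)$ and lets the boundary-point argument run.
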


In [\cite{Yan1}], numerous equivalent characterizations of one-sided (generalized) Drazin invertibility are demonstrated in context of Banach space operators. For a Banach algebra $\mathcal{A}$, Jacobson\textquoteright s lemma for invertibility states that $1-ac$ is invertible if and only if $1-ca$ is invertible where $a,c\in \mathcal{A}$. This leads to the classical spectral identity
$$\sigma(ac)\backslash\{0\}=\sigma(ca)\backslash\{0\}.$$
The Jacobson\textquoteright s lemma for left and right invertibility can be found in [\cite{Muller}, Chapter I]. Over the last two decades, much research has been conducted on Jacobson\textquoteright s lemma for various types of generalized inverses, as see [\cite{Gonzalez}, \cite{Cvetkovic}, \cite{Lam}, \cite{Mosic}, \cite{Zhuang}]. Just as with invertibility, the natural question is whether Jacobson\textquoteright s lemma holds for one-sided generalized inverses. However, there is hardly any research on Jacobson\textquoteright s lemma for one-sided generalized inverses. One of the principal contributions of this paper is to establish the Jacobson\textquoteright s lemma for one-sided regularity and one-sided (generalized) Drazin inverse in Banach algebras.

Furthermore, Drazin [\cite{Drazin1}] introduced a new class of one-sided generalized inverse, namely one-sided $(b,c)$-inverse. Nevertheless, the question of whether Jacobson's lemma holds for one-sided (b,c)-inverse still remains open. To solve this problem, Drazin proposed seeking an analogue of the \textquotedblleft Azumaya realization\textquotedblright ~ for $(b,c)$-inverse, as discussed in [\cite{Drazin1}, Section 7]. Notably, it can be observed that our one-sided Drazin inverse is a subclass of the one-sided $(b,c)$-inverse when $b=c=a^j$. In this paper, the author establishes an analogue of the \textquotedblleft Azumaya realization\textquotedblright ~for one-sided Drazin inverse. Thus, examining Jacobson's lemma for one-sided Drazin inverse may provide a new approach to Drazin\textquoteright s open question on $(b,c)$-inverse in [\cite{Drazin1}, Section 7].

 This article is organized as follows. In Section 2, the author provides several fundamental properties, including intertwining properties and reverse order laws, of one-sided (generalized) Drazin inverse in Banach algebras. In particular, we introduce the concepts of one-sided strongly $\pi$-regularity, which is shown to be equivalent to one-sided Drazin invertibility and is pivotal in examining Jacobson\textquoteright s lemma for one-sided Drazin invertibility. In Section 3, Jacobson\textquoteright s lemma for one-sided regularity and one-sided (generalized) Drazin invertibility are established. As a corollary, the spectral identity of element products for one-sided (generalized) Drazin spectrum is obtained (Corollary \ref{3.11}). Surprisingly, the Cline formula is more difficult to prove than the Jacobson\textquoteright s lemma one-sided (generalized) Drazin invertibility. In Section 4, the spectral identity of two intertwining operators for one-sided (generalized) Drazin spectrum is established (Corollary \ref{4.7}). As applications, in Section 5, we utilize the results from Sections 3 and 4 to derive the spectral identities of some new types of operators, namely
$$\sigma_{*}(TS)\setminus\{0\}=\sigma_{*}(ST)\setminus\{0\},$$
where $T,S\in B(X)$ and $\sigma_{*}$ denotes the left and right B-Fredholm spectra. These spectral identities can be viewed as generalizations of classical conclusions regarding the spectra of one-sided Fredholm operators or one-sided essentially invertible operators (see [\cite{Muller}, p. 173, Chapter III, Theorem 6]).

\section{Fundamental properties of one-sided (generalized) Drazin inverses}

At the beginning of this section, we present the following properties, which are similar to those of one-sided Drazin invertibility as described in [\cite{Yan1}].

\begin{proposition} \label{3} Let $a\in \mathcal{A}$. Then

 \emph{(i)} $a$ is left generalized Drazin invertible if and only if there exists an element $b\in \mathcal{A}$ and an integer $j$ such that
\begin{equation}\label{eq1}
aba=ba^2,~~bab=b^2a=b \makebox{~~and~~}aba-a\in \mathcal{A}^{qnil}.
\end{equation}

\emph{(ii)} $a$ is right generalized Drazin invertible if and only if there exists an element $c\in \mathcal{A}$ and an integer $k$ such that
\begin{equation}\label{eq2}
aca=c^2a,~~cac=ac^2=c \makebox{~~and~~}aca-a\in \mathcal{A}^{qnil}.
\end{equation}
\end{proposition}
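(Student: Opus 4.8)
The plan is to prove (i) in full and then obtain (ii) by the evident left--right dual argument (equivalently, by applying (i) in the opposite algebra $\mathcal{A}^{\mathrm{op}}$). For (i) the only nontrivial implication is the forward one: starting from a left generalized Drazin inverse $x$ of $a$, which by definition satisfies $axa=xa^{2}$, $x^{2}a=x$ and $axa-a\in\mathcal{A}^{qnil}$, I must produce an element $b$ satisfying the stronger system \eqref{eq1}, whose only genuinely new requirement compared with the definition is the ``outer'' relation $bab=b$. Note that the integer $j$ does not actually appear in \eqref{eq1} and plays no role in the generalized setting, so it may be chosen arbitrarily.

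First I would record the single computation that drives everything: left-multiplying $axa=xa^{2}$ by $x$ and invoking $x^{2}a=x$ gives
$$xaxa=x\cdot xa^{2}=(x^{2}a)a=xa,$$
so that $e:=xa$ is idempotent, $e^{2}=e$. This idempotency is the real engine of the argument, since the defining relations by themselves do \emph{not} force the missing identity $xax=x$; this is precisely why the correct candidate turns out to be $b:=xax$ rather than $x$ itself.

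Next I would set $b:=xax$ and verify \eqref{eq1} directly, each identity collapsing under repeated use of $e^{2}=e$ and $x^{2}a=x$. For instance $b^{2}a=(xax)(xax)a$ reduces to $xax=b$ after applying $x^{2}a=x$ and $e^{2}=e$; the word $bab=(xax)a(xax)=xaxaxax$ reduces to $b$ by replacing the factor $xaxa$ by $xa$ twice; and $aba=axaxa=(axa)(xa)=xa^{2}\cdot xa$ collapses to $xa^{2}$, which equals $ba^{2}=(xax)a^{2}=(xaxa)a=xa^{2}$, giving $aba=ba^{2}$. Finally, since $aba=xa^{2}=axa$ by the first defining relation, we obtain $aba-a=axa-a\in\mathcal{A}^{qnil}$, which is the last clause of \eqref{eq1}.

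The converse needs no new idea and is immediate: given $b$ as in \eqref{eq1}, the element $x:=b$ already satisfies $axa=aba=ba^{2}=xa^{2}$, $x^{2}a=b^{2}a=b=x$ and $axa-a=aba-a\in\mathcal{A}^{qnil}$, so $a$ is left generalized Drazin invertible by definition; the extra hypothesis $bab=b$ is simply discarded. Thus the main (and essentially only) obstacle is the forward direction, where the point is to recognize the upgraded candidate $b=xax$ and to see that all of its required identities are consequences of the lone idempotency $xaxa=xa$.
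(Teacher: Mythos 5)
Your proposal is correct and follows essentially the same route as the paper: both take the candidate $b=xax$ and verify the relations of \eqref{eq1} directly from $axa=xa^{2}$ and $x^{2}a=x$ (your use of the idempotency $xaxa=xa$ is just a clean way of organizing the same substitutions). The only cosmetic difference is that you spell out the trivial converse and the duality for (ii), which the paper leaves implicit.
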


\begin{proof} Suppose that $a$ has a left generalized Drazin inverse $x$. Set $b=xax$, then
$$aba=a(xax)x^2a^3=(xax)a^2=ba^2,$$
$$bab=(xax)a(xax)=x^3a^3x=xax=b$$
and
$$b^2a=(xax)^2a=(xax)x^2a^2=xax^2a=xax=b.$$
Moreover, we have
$$aba-a=a(xax)a-a=axa-a\in \mathcal{A}^{qnil}.$$
Hence $b$ satisfies equation (\ref{eq1}). The computation of the right Drazin invertible case is similar.
\end{proof}

Intertwining properties are instrumental in computation of generalized inverses, see [\cite{Drazin1}]. Despite Ren and Jiang\textquoteright s exploration of the intertwining properties of one-sided Drazin inverses in [\cite{Ren}], we still exhibit following result (Proposition \ref{24} (i)) because our definitions of one-sided Drazin inverses enable us to observe \textquoteleft more\textquoteright~left or right Drazin inverses.

\begin{proposition} \label{24} Let $a, b\in \mathcal{A}$ and $az=zb$ for some $z\in \mathcal{A}$.

\emph{(i)} If $a$ has a left Drazin inverse $x$ and $b$ has a right Drazin invertible $y$, then $xz=zy$.

\emph{(ii)} If $a$ has a left generalized Drazin inverse $x$ and $b$ has a right generalized Drazin invertible $y$, then $xz=zy$.
\end{proposition}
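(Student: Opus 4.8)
The plan is to prove both parts from a single algebraic identity, letting the finishing step differ only in whether a certain remainder is nilpotent or merely quasinilpotent. First I would record the two idempotents attached to the one-sided inverses. Since $x^2a=x$ and $axa=xa^2$, the element $e:=xa$ satisfies $e^2=x(axa)=x(xa^2)=(x^2a)a=xa=e$, and moreover $ae=axa=xa^2=ea$, so $a$ commutes with $e$; dually $f:=by$ is idempotent and $bf=b^2y=byb=fb$, so $b$ commutes with $f$. I then set $s:=a-axa=a(1-e)$ and $t:=b-byb=b(1-f)$, which are exactly the remainders in the definitions: $s,t$ are nilpotent in part (i) (they vanish once the exponent passes the Drazin index) and $s,t\in\mathcal{A}^{qnil}$ in part (ii). Because $a$ commutes with the idempotent $1-e$, we have $a^n(1-e)=(a(1-e))^n=s^n$, whence $ea^n=a^n-s^n$; symmetrically $b^n(1-f)=t^n$, so $b^n=b^nf+b^n(1-f)=b^{n+1}y+t^n$.

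Next I would collect the ``collapsing'' identities that make the one-sided inverses behave like genuine ones after enough multiplications. From $x^2a=x$ one gets $x^{m+1}a=x^m$, and iterating gives both $x=x^{n+1}a^n$ and $x^{n+1}a^{n+1}=e$ for every $n\ge 1$; dually $by^2=y$ yields $y=b^ny^{n+1}$, so that $zy=zb^ny^{n+1}=a^nzy^{n+1}$ after invoking $a^nz=zb^n$ (the immediate consequence of $az=zb$). Feeding these into each other, I would compute
\begin{equation*}
xz=x^{n+1}a^nz=x^{n+1}zb^n=x^{n+1}z(b^{n+1}y+t^n)=x^{n+1}a^{n+1}zy+x^{n+1}zt^n=e\,zy+x^{n+1}zt^n,
\end{equation*}
while on the other hand $e\,zy=ea^nzy^{n+1}=(a^n-s^n)zy^{n+1}=zy-s^nzy^{n+1}$. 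Subtracting yields the master identity
\begin{equation*}
xz-zy=x^{n+1}zt^n-s^nzy^{n+1},
\end{equation*}
valid for all $n\ge1$.

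Finally I would discharge the two cases. In part (i) both $s$ and $t$ are nilpotent, so for $n$ larger than the two Drazin indices the right-hand side is literally $0$ and $xz=zy$ follows with no analysis. In part (ii) neither power vanishes, and here the Banach-algebra norm must do the work: by submultiplicativity $\|xz-zy\|\le \|x\|^{\,n+1}\|z\|\,\|t^n\|+\|s^n\|\,\|z\|\,\|y\|^{\,n+1}$, and since $r(s)=r(t)=0$ give $\|s^n\|^{1/n}\to0$ and $\|t^n\|^{1/n}\to0$, both $\|x\|^{\,n+1}\|t^n\|$ and $\|s^n\|\,\|y\|^{\,n+1}$ tend to $0$ as $n\to\infty$; as the left-hand side is independent of $n$, it must be $0$. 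I expect the main obstacle to be precisely this passage in the generalized case: the exact algebraic cancellation available for the Drazin index is lost, so one must reroute through a limit, which is exactly where quasinilpotency and the completeness of $\mathcal{A}$ enter. A secondary technical point is verifying that $s$ and $t$ genuinely are the quasinilpotent remainders living in the complementary corners $(1-e)\mathcal{A}(1-e)$ and $(1-f)\mathcal{A}(1-f)$, since that is what legitimizes $a^n(1-e)=s^n$ and $b^n(1-f)=t^n$ and hence the whole telescoping.
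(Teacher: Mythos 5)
Your proof is correct and follows essentially the same route as the paper: both hinge on the master identity $xz-zy=x^{n+1}z(b-byb)^n-(a-axa)^nzy^{n+1}$ (the paper reaches it by inserting the cross term $xazy$, you via the idempotents $e=xa$, $f=by$), and both then kill the right-hand side through the quasinilpotence of $a-axa$ and $b-byb$ as $n\to\infty$. The only genuine divergence is part (i): the paper deduces it from part (ii) because nilpotents are quasinilpotent, whereas you finish it purely algebraically by taking $n$ beyond both Drazin indices so that $s^n=t^n=0$ --- a slightly more elementary variant which, as a bonus, remains valid in an arbitrary ring rather than only in a Banach algebra.
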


\begin{proof} We need only to prove (ii), because the one-sided Drazin inverse of an element must be one-sided generalized Drazin inverse. Suppose that $a\in \mathcal{A}$ has a left generalized Drazin inverse $x$ and $b$ has a right generalized Drazin inverse $y$. For any non-negative integer $n$,
$$
\begin{aligned} ||xz-zy||^{\frac{1}{n}}&=||x^{n+1}a^nz-zb^ny^{n+1}||^{\frac{1}{n}}\\
&=||x^{n+1}a^nz-xazy+xazy-zb^ny^{n+1}||^{\frac{1}{n}}\\
&=||x^{n+1}zb^n-x^{n+1}a^{n+1}zy+xzb^{n+1}y^{n+1}-a^nzy^{n+1}||^{\frac{1}{n}}\\
&=||x^{n+1}zb^n-x^{n+1}zb^{n+1}y+xa^{n+1}zy^{n+1}-a^nzy^{n+1}||^{\frac{1}{n}}\\
&=||x^{n+1}z(b-byb)^n+(axa-a)^nzy^{n+1}||^{\frac{1}{n}}\\
&\leq ||x^{n+1}z(b-byb)^n||^{\frac{1}{n}}+||(axa-a)^nzy^{n+1}||^{\frac{1}{n}}\\
&\leq ||x||^{\frac{n+1}{n}}||z||^{\frac{1}{n+1}}||(a-aya)^n||^{\frac{1}{n}}+||(axa-a)^n||^{\frac{1}{n}}||z||^{\frac{1}{n+1}}||y||^{\frac{n+1}{n}}.
\end{aligned}
$$
Since $b-byb\in \mathcal{A}^{qnil}$ and $axa-a\in \mathcal{A}^{qnil}$, $\lim\limits_{n\rightarrow\infty}||xz-zy||^{\frac{1}{n}}=0$. Thus $||xz-zy||$ must be zero and so $xz=zy$.
\end{proof}

\begin{corollary} Let $a, b\in \mathcal{A}$ and $az=zb$ for some $z\in \mathcal{A}$. If $x$ is the Drazin inverse (resp. generalized Drazin inverse) of $a$ and $y$ is the Drazin inverse (resp. generalized Drazin inverse) of $b$, then $xz=zy$.
\end{corollary}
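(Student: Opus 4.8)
The plan is to obtain this statement as an immediate consequence of Proposition \ref{24}, by observing that a two-sided (generalized) Drazin inverse is, in particular, a one-sided (generalized) Drazin inverse of the appropriate handedness. The only genuine content is the bookkeeping that lets Proposition \ref{24} apply; no new estimate is required.

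Concretely, in the Drazin case, suppose $x$ is the Drazin inverse of $a$, so that $ax=xa$, $x^2a=x$ and $a^{j+1}x=a^j$ for some index $j$. By Proposition \ref{1.4}(i), two-sided Drazin invertibility of $a$ forces $a$ to be both left and right Drazin invertible, with the one-sided inverses coinciding with $x$; in particular $x$ is a \emph{left} Drazin inverse of $a$. (One checks this directly: $ax=xa$ gives $axa=xa^2$ and $xa^{j+1}=a^{j+1}x=a^j$, which together with $x^2a=x$ are exactly the defining relations of a left Drazin inverse.) Dually, if $y$ is the Drazin inverse of $b$, then Proposition \ref{1.4}(i) shows $y$ is a \emph{right} Drazin inverse of $b$. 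Since $az=zb$ by hypothesis, Proposition \ref{24}(i) now applies verbatim and yields $xz=zy$.

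For the generalized Drazin case I would argue identically, replacing the appeals to parts (i) by parts (ii): if $x$ is the generalized Drazin inverse of $a$, then by Proposition \ref{1.4}(ii) it is in particular a left generalized Drazin inverse of $a$, and likewise the generalized Drazin inverse $y$ of $b$ is a right generalized Drazin inverse of $b$. The relation $az=zb$ together with Proposition \ref{24}(ii) then gives $xz=zy$.

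The anticipated ``hard part'' is therefore essentially absent: the corollary is a packaging statement. The only point deserving a word of care is confirming that the commutation $ax=xa$ built into the two-sided definition really does promote $x$ to a one-sided inverse in the sense of the one-sided definitions used here, so that the spectral-radius estimate carried out in the proof of Proposition \ref{24} does not have to be redone. This verification is routine and is exactly what Proposition \ref{1.4} records, so I would simply cite it rather than reproduce the computation.
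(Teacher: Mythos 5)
Your proposal is correct and matches the paper's intended argument: the corollary is stated without proof precisely because, as you observe, the two-sided (generalized) Drazin inverse satisfies the defining relations of a left inverse of $a$ and a right inverse of $b$ (the commutation $ax=xa$ immediately gives $axa=xa^2$ and $xa^{j+1}=a^j$, resp.\ $axa-a\in\mathcal{A}^{qnil}$), so Proposition \ref{24} applies directly. Your explicit verification of this promotion is exactly the routine bookkeeping the paper leaves implicit.
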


Let $a, b\in \mathcal{A}$ be both invertible, then $(ab)^{-1}=b^{-1}a^{-1}$. This was extended in [\cite{Koliha1}, Theorem 5.5] to generalized Drazin inverse under the condition $ab=ba$. Such a result was termed the reverse order law of generalized Drazin inverse. In the following, we aim to establish one-sided version of these results. Recall that the \emph{commutant} and \emph{double~commutant} of an element $a$ are defined by
$$\makebox{comm}(a)=\{x\in\mathcal{A}:~xa=ax\}$$
and
$$\makebox{comm}^2(a)=\{x\in \mathcal{A}:~xy=yx\makebox{~for~all~}y\in\makebox{comm}(a)\}.$$
It is well-known that if $x$ is the (generalized) Drazin inverse of $a\in\mathcal{A}$, then $x\in$ comm$^2(a)$.

\begin{theorem} Let $a, b \in \mathcal{A}$ and $a\in$ comm$^2(b)$.

 \emph{(i)} If $a$ has the Drazin inverse $x$ and $b$ has a left (resp. right) Drazin inverse $y$, then $yx$ is a left (resp. right) Drazin inverse of $ab$.

 \emph{(ii)} If $a$ has the generalized Drazin inverse $x$ and $b$ has a left (resp. right) generalized Drazin inverse $y$, then then $yx$ is a left (resp. right) generalized Drazin inverse of $ab$.
\end{theorem}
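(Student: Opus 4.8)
The plan is to reduce the statement to checking the three defining relations for the relevant product, after isolating two commuting idempotents that carry the whole argument. First I would record the commutations. Since $b\in\mathrm{comm}(b)$ and $a\in\mathrm{comm}^2(b)$ we get $ab=ba$, hence $b\in\mathrm{comm}(a)$; as $x\in\mathrm{comm}^2(a)$ this yields $ax=xa$ and $bx=xb$. Consequently $p:=ax=xa$ is an idempotent in the commutative algebra $\mathrm{comm}^2(a)$, satisfying $px=xp=x$ together with $a^{k}p=a^{k}$ in the Drazin case and $a(1-p)=a-a^2x\in\mathcal{A}^{qnil}$ in the generalized case; moreover $pb=bp$.

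The delicate object is the one-sided inverse $y$. For the left case I would set $e:=yb$: the relations $y^2b=y$ and $byb=yb^2$ make $e$ idempotent with $ye=y$, while $eb=yb^2=byb=be$ shows $e\in\mathrm{comm}(b)$. This is the heart of the matter. A genuine one-sided inverse need not commute with $b$, so Koliha's two-sided reverse-order computation cannot be copied verbatim; in particular $y$ itself will not commute with $a$, which is the one real obstacle. The way around it is that $e=yb$ \emph{does} commute with $b$, so $a\in\mathrm{comm}^2(b)$ forces $e\in\mathrm{comm}(a)$, and hence $e$ commutes with both $x$ and $p$. I would also note $eb^{j}=yb^{j+1}=b^{j}$ in the Drazin case, and $(1-e)b=b-byb\in\mathcal{A}^{qnil}$ in the generalized case directly from the definition of left generalized Drazin inverse.

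With these facts the verification is mechanical. The pivotal identity is $(yx)(ab)=y(xa)b=ypb=ybp=ep$. From it, using $ye=y$, $xp=x$ and $xe=ex$, one gets $(yx)^2(ab)=(yx)(ep)=y(ex)p=(ye)(xp)=yx$, the reflexive law; since $a,b,e,p$ commute pairwise, $(ab)(yx)(ab)=(ab)(ep)=(ep)(ab)=(yx)(ab)^2$, the inner law; and $(yx)(ab)^{m+1}=ep\,a^{m}b^{m}=a^{m}b^{m}=(ab)^{m}$ for $m=\max\{j,k\}$, using $a^{m}p=a^{m}$ and $eb^{m}=b^{m}$, the index law. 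In the generalized case the index law is replaced by the quasinilpotency of $ab-(ab)(yx)(ab)=ab(1-ep)=(1-e)ab+e\,ab(1-p)$: both summands are quasinilpotent because $(1-e)b$ and $a(1-p)$ are and commute with everything involved, and a sum of commuting quasinilpotents is again quasinilpotent by subadditivity of the spectral radius in the commutative subalgebra generated by $a,b,e,p$.

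The right case is the left--right mirror image: one works instead with $f:=by$, which is idempotent, satisfies $fy=y$ and $f\in\mathrm{comm}(b)\subseteq\mathrm{comm}(a)$, and the symmetric computations verify the right (generalized) Drazin relations for the corresponding product. I expect the main difficulty to be exactly the non-commutation of $y$ with $a$ flagged above; once all commutations are funnelled through the idempotent $e$ (resp.\ $f$), the remainder is bookkeeping, and this is also what dictates keeping the one-sided factor on the outer side of the product so that it never has to cross the part coming from $a$.
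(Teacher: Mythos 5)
Your left-hand case is correct and is essentially the paper's own proof: the paper, too, proves only that case, deriving $a(yb)=(yb)a$ from $yb\in\mathrm{comm}(b)$ and $a\in\mathrm{comm}^{2}(b)$, and $bx=xb$ from $x\in\mathrm{comm}^{2}(a)$, then verifying the three identities by the same computation; your two commuting quasinilpotent summands $(1-e)ab$ and $e\,ab(1-p)$ are exactly what the paper's three-term decomposition $(a-a^{2}x)(b-yb^{2})+a^{2}x(b-yb^{2})+yb^{2}(a-a^{2}x)$ adds up to, and your explicit check of the finite-index law with $m=\max\{j,k\}$ is a detail the paper leaves implicit.

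The genuine gap is the right-hand case, which you dispatch as ``the left--right mirror image \dots for the corresponding product.'' The mirror image of your argument (formally: apply the proven left case in the opposite algebra) shows that $xy$ is a right (generalized) Drazin inverse of $ab$ --- consistent with your own dictum that the one-sided factor must stay on the outer side --- whereas the statement asserts this for $yx$, and for $yx$ the argument genuinely breaks. Indeed $(ab)(yx)=(by)(ax)=fp$, hence $(ab)(yx)^{2}=fp\,yx=p(fy)x=p\,yx$, and the reflexive law would require $(1-p)yx=0$; this cannot be funneled through $f$, because here $y$ itself must cross $p$, which is precisely the obstruction you flagged. The failure is real, not just a missing lemma: in $\mathcal{A}=B(\mathbb{C}\oplus\ell^{2})$, with $S$ the unilateral shift ($S^{*}S=1$) and $w:\mathbb{C}\to\ell^{2}$ the rank-one map $w(1)=e_{0}$, where $e_{0}$ spans $\ker S^{*}$, put
\begin{equation*}
a=x=\begin{pmatrix}1&0\\0&0\end{pmatrix},\qquad b=\begin{pmatrix}2&0\\0&S^{*}\end{pmatrix},\qquad y=\begin{pmatrix}\tfrac12&0\\ w&S\end{pmatrix}.
\end{equation*}
Then $by=1$, so $y$ is a right Drazin inverse of $b$ with index $0$; $a$ is idempotent, so $x=a$ is its Drazin inverse; and every element of $\mathrm{comm}(b)$ is block diagonal (its corner entries $\eta:\mathbb{C}\to\ell^{2}$ and $\varphi:\ell^{2}\to\mathbb{C}$ must satisfy $S^{*}\eta=2\eta$ and $\varphi S^{*}=2\varphi$, which force $\eta=0$, $\varphi=0$), so $a\in\mathrm{comm}^{2}(b)$. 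Yet
\begin{equation*}
(ab)(yx)^{2}=\begin{pmatrix}\tfrac12&0\\ 0&0\end{pmatrix}\neq\begin{pmatrix}\tfrac12&0\\ w&0\end{pmatrix}=yx,
\end{equation*}
so $yx$ is not a right (generalized) Drazin inverse of $ab$, while $xy$ is one. Hence your symmetry claim proves a corrected statement (with $xy$ in the right case), not the statement as written; in fact this shows that the right half of the statement itself --- which the paper likewise leaves to an unproved duality --- is false as printed, and the hedge ``the corresponding product'' conceals exactly the point where it fails.
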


\begin{proof} We only prove the case of the left generalized Drazin inverse. Suppose that $x$ is the Drazin inverse of $a$ and $y$ is one of the left generalized Drazin inverses of $b$. Since $b(yb)=(yb)b$ and $a\in$ comm$^2(b)$, we have $a(yb)=(yb)a$. Furthermore, as $x\in$ comm$^2(a)$ and $ab=ba$, we can deduce that $xb=bx$. Therefore
$$\begin{aligned}
ab(yx)ab&=abybxa=ayb^2xa=ybabxa=yx(ab)(ab)
\end{aligned}
$$
and
$$(yx)^2(ab)=yxyxab=yxybxa=yybx^2a=yx.$$
Moreover,
$$\begin{aligned}
ab-yx(ab)^2=(a-a^2x)(b-yb^2)+a^2x(b-yb^2)+yb^2(a-a^2x).
\end{aligned}
$$
By commutativity, we have
$$\begin{aligned}
r(ab-yx(ab)^2)
\leq &r(a-a^2x)r(b-yb^2)+r(a^2x)r(b-yb^2)+\\
&r(yb^2)r(a-a^2x).
\end{aligned}
$$
Hence $r(ab-yx(ab)^2)=0$, which implies that $ab-yx(ab)^2$ is quasi-nilpotent.
\end{proof}

We are temporarily unable to extend the definition of the one-sided generalized Drazin inverse from Banach algebras to rings, as the analogue of many important properties (such as Proposition \ref{1.4}) of one-sided generalized Drazin inverse are unlikely to be established in the context of associative rings. The definition of one-sided generalized Drazin inverse in the normed algebra $C_0(X)=B(X)/F(X)$, which is not a Banach algebra, has been established in [\cite{AB1}]. Consequently, all the results concerning one-sided generalized inverse in this paper are valid and meaningful only in Banach algebras, and not in associative rings. It remains an interesting problem to find a suitable definition of one-sided generalized Drazin inverse in associative rings.

In a ring $\mathcal{R}$, an element $a\in \mathcal{R}$ is \textit{$\pi$-regular} if there exist an element $x$ and an integer $n$ such that $a^nxa^n=a^n$. The one-sided version of $\pi$-regular is called \textit{left $\pi$-regular} (resp. \textit{right $\pi$-regular}) if there exist an element $x$ and an integer $n$ such that $xa^{2n}=a^n$ (resp. $a^{2n}x=a^n$), see [\cite{Azu}, \cite{Kaplansky}]. It can be easily seen that $a\in \mathcal{R}$ is left $\pi$-regular (resp. right $\pi$-regular) if and only if there exists an element $x$ such that $xa^{n+1}=a^n$ (resp. $a^{n+1}x=a^n$). Then $a\in \mathcal{R}$ is strongly $\pi$-regular exactly when $a$ is both left and right $\pi$-regular. We now introduce the definitions of the one-sided analogue of Azumaya\textquoteright s strongly $\pi$-regular.

\begin{definition} \emph{An element $a\in \mathcal{R}$ is called \textit{left strongly $\pi$-regular}, if there exist an element $x\in \mathcal{R}$ and an integer $p$ such that
$$ axa=xa^2,~~~ xa^{p+1}=a^p.$$
If such $x$ exists, it is called \textit{left inner inverse} of $a$. Dually, an element $a\in \mathcal{A}$ is called \textit{right strongly $\pi$-regular}, if there exist an element $y\in \mathcal{A}$ and an integer $q$ such that
$$aya=a^2y,~~~ a^{q+1}y=a^q.$$
If such $y$ exists, it is called \textit{right inner inverse} of $a$.}
\end{definition}

The so called \textquotedblleft Azumaya realization\textquotedblright~was obtained by Drazin in [\cite{Drazin}, Theorem 4]. The \textquotedblleft Azumaya realization\textquotedblright~ of bounded linear operators in the context of Banach spaces is discussed in [\cite{Muller}, Chapter III]. Subsequently, the following result demonstrates the one-sided version of classical \textquotedblleft Azumaya realization\textquotedblright~ for Drazin inverse.

\begin{theorem} \label{1} Let $a\in \mathcal{R}$. Then

 \emph{(i)} $a$ is left Drazin invertible if and only if $a$ is left strongly $\pi$-regular.

\emph{(ii)} $a$ is right Drazin invertible if and only if $a$ is right strongly $\pi$-regular.
\end{theorem}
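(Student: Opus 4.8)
The plan is to prove the two parts by a single argument plus its mirror image, so I would write out (i) in full and obtain (ii) by the obvious left--right dualization. For (i), the forward implication is essentially definitional: if $x$ is a left Drazin inverse of $a$ of index $j$, then $axa=xa^2$ and $xa^{j+1}=a^j$ hold by definition, and these are precisely the two equations required for left strong $\pi$-regularity (with $p=j$). So the entire content lies in the converse.

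For the converse, assume $x\in\mathcal R$ and an integer $p$ satisfy $axa=xa^2$ and $xa^{p+1}=a^p$. First I would isolate the element $e:=xa$. The relation $axa=xa^2$ says exactly $ae=ea$, so $e$ commutes with every power of $a$; and $xa^{p+1}=a^p$ rewrites as $ea^p=a^p$, whence also $a^pe=a^p$ and $a^pe^k=a^p$ for all $k$. From the single intertwining relation I would then derive by induction the two ``collapsing'' identities $xa^k=a^{k-1}e$ for $k\ge1$ and, crucially, $x^ka^k=e^k$ for all $k\ge1$; the latter follows since $x^ka^k=x^{k-1}(xa^k)=x^{k-1}a^{k-1}e=e^{k-1}e$. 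I would also note $xa^{m+1}=a^m$ for every $m\ge p$, obtained by multiplying $xa^{p+1}=a^p$ on the right by powers of $a$.

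The decisive step is to guess the right candidate. Modelling the one-sided situation on the classical Azumaya realization $a^{D}=a^{n}b^{n+1}$, I would set $b:=x^{p+1}a^{p}$ and verify the three defining equations of a left Drazin inverse. The identity $x^{p+1}a^{p+1}=e^{p+1}$ together with $a^{p}e=a^{p}$ does all the work: for the power condition, $ba^{p+1}=x^{p+1}a^{2p+1}=a^{p}$ by repeated use of $xa^{m+1}=a^m$, giving index $j\le p$; for $b^2a=b$, one computes $b^2a=x^{p+1}a^{p}\,(x^{p+1}a^{p+1})=x^{p+1}a^{p}e^{p+1}=x^{p+1}a^{p}=b$; and for the intertwining, $aba=ax^{p+1}a^{p+1}=ae^{p+1}=e^{p+1}a$ while $ba^2=x^{p+1}a^{p+2}=e^{p+1}a$, so $aba=ba^2$.

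I expect the main obstacle to be discovery rather than technical difficulty: recognizing that although $a$ and $x$ need not commute, the partial commutativity packaged in $e=xa$ is exactly enough, and in particular establishing $x^ka^k=e^k$, which is what allows the $x$'s and $a$'s in $b=x^{p+1}a^p$ to cancel without ever invoking the missing hypothesis $x^2a=x$. Once that identity and $a^pe=a^p$ are in place, the three verifications are one-line computations. Part (ii) runs identically with $f:=ay$, the dual identities $a^ky^k=f^k$ and $a^qf=a^q$, and the candidate $b:=a^{q}y^{q+1}$.
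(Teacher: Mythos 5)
Your proof is correct and takes essentially the same approach as the paper: both arguments use the same candidate $b=x^{p+1}a^{p}$ (the paper calls it $c$) and verify the three defining identities of a left Drazin inverse of index $p$. Your preliminary identities $e=xa$, $ae=ea$, $x^{k}a^{k}=e^{k}$ and $a^{p}e=a^{p}$ simply make explicit the commutation steps that the paper's direct computations leave implicit.
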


\begin{proof} (i) If $a$ is left Drazin invertible with index $j$, then it is obvious that $a$ is left strongly $\pi$-regular. Conversely, suppose that $a$ is left strongly $\pi$-regular, i.e. there exist an element $x$ and an integer $p$ such that  $axa=xa^2$ and $xa^{p+1}=a^p$. Put $c=x^{p+1}a^p$, then
$$aca=ax^{p+1}a^pa=x^{p+1}a^{p+2}=(x^{p+1}a^p)a^2=ca^2.
$$
and
$$\begin{aligned}
c^2a&=x^{p+1}a^px^{p+1}a^pa=x^{2p+2}a^{2p+1}=x^{2p+1}(xa^{p+1})a^p\\
&=x^{2p+1}a^{2p}=...=x^{p+1}a^p=c.
\end{aligned}
$$
Moreover
$$ca^{p+1}=x^{p+1}a^pa^{p+1}=x^{p+1}a^{2p+1}=x^pa^{2p}=...=a^p.$$
Hence $c$ is the left Drazin inverse of $a$ with the index $p$.

(ii) The proof is dual to that of (i).
\end{proof}

It is evident that if an element $a\in \mathcal{R}$ is both left and right strongly $\pi$-regular, then $a$ is strongly $\pi$-regular. By Theorems \ref{6} and \ref{1}, we can immediately obtain the (1) of Theorem \ref{1.4} without any computation.

\begin{remark} \label{rk1} \emph{Let $a\in \mathcal{R}$ be left strongly $\pi$-regular, i.e. there is an element $x\in \mathcal{R}$ such that $axa=xa^2$, $xa^{p+1}=a^p$. From the proof of Theorem \ref{1}, we observe that $x^{p+1}a^p$ is a left Drazin inverse of $a$ with the index $p$. If $a\in \mathcal{A}$ be right strongly $\pi$-regular with corresponding right regular inverse $y\in \mathcal{R}$ and index $q$, then the right Drazin inverses of $a$ is $a^qy^{q+1}$ with Drazin index $q$. This observation plays an important role in the next section.}
\end{remark}

Recall that a ring $\mathcal{R}$ is called \textit{regular} (or Von Neumann regular) if every element in $\mathcal{R}$ is regular. The concept of a regular ring, first introduced in von Neumann\textquoteright s pioneering work, continues to serve as a valuable tool in the study of operator algebras. As a generalization of regular ring and algebraic algebra, Kaplansky firstly introduced and studied the $\pi$-regular rings in [\cite{Kaplansky}]. If every element $a\in \mathcal{R}$ is $\pi$-regular (resp. left $\pi$-regular, right $\pi$-regular and strongly $\pi$-regular), then the ring $\mathcal{R}$ itself is called \textit{$\pi$-regular} (resp. \textit{left $\pi$-regular, right $\pi$-regular and strongly $\pi$-regular}). A ring is called \textit{bounded index} if there is a fixed upper bounded to the indices of nilpotent elements.

\begin{corollary}
If $\mathcal{R}$ is a ring of bounded index, then the following conditions are equivalent:

 \emph{(i)} $\mathcal{R}$ is $\pi$-regular;

 \emph{(ii)} $\mathcal{R}$ is right $\pi$-regular;

 \emph{(iii)} $\mathcal{R}$ is left $\pi$-regular;

 \emph{(iv)} $\mathcal{R}$ is strongly $\pi$-regular;

 \emph{(v)} $\mathcal{R}$ is right strongly $\pi$-regular;

 \emph{(vi)} $\mathcal{R}$ is left strongly $\pi$-regular.

\end{corollary}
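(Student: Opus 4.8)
The plan is to funnel every condition into (iv). By Theorems \ref{6} and \ref{1}, conditions (iv), (v), (vi) assert precisely that every element of $\mathcal{R}$ is Drazin invertible, right Drazin invertible, and left Drazin invertible, respectively. Several implications are then immediate and need no hypothesis on the index. First, (iv)$\Rightarrow$(i),(ii),(iii), since a strongly $\pi$-regular element is in particular $\pi$-regular and both left and right $\pi$-regular; likewise (v)$\Rightarrow$(ii) and (vi)$\Rightarrow$(iii), because deleting the intertwining relation $aya=a^{2}y$ (resp. $axa=xa^{2}$) from the definition of right (resp. left) strong $\pi$-regularity leaves exactly right (resp. left) $\pi$-regularity. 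Conversely, if $a$ is strongly $\pi$-regular then the commuting element $b$ furnished by Azumaya's theorem (with $ab=ba$ and $a^{n}=a^{n+1}b$) is simultaneously a left inner inverse and a right inner inverse of $a$, so (iv)$\Rightarrow$(v) and (iv)$\Rightarrow$(vi); and the observation recorded just before this corollary gives (v)$\wedge$(vi)$\Rightarrow$(iv). Thus the problem reduces to the three upward implications (i)$\Rightarrow$(iv), (ii)$\Rightarrow$(iv) and (iii)$\Rightarrow$(iv).

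The whole force of the bounded-index hypothesis is concentrated in these, the essential point being that bounded index upgrades (one- or two-sided) $\pi$-regularity to strong $\pi$-regularity. I would proceed in two stages. First I would show that bounded index makes $\mathcal{R}$, and every corner $e\mathcal{R}e$, Dedekind finite: if $uv=1$ but $vu\neq 1$, then $p=1-vu$ is a nonzero idempotent and, using $up=0=pv$, the elements $v^{i}pu^{j}$ form nonzero matrix units, whence $\sum_{i<m}v^{i}pu^{i+1}$ is nilpotent of index $m+1$ for every $m$, contradicting bounded index; a corner inherits the index bound and hence this finiteness. Second, for (i) I would start from $a^{n}xa^{n}=a^{n}$, form the idempotent $e=a^{n}x$ with $a^{n}\mathcal{R}=e\mathcal{R}$ and $ea^{n}=a^{n}$, and then pass to $\mathcal{R}/\mathrm{Nil}_{*}(\mathcal{R})$, which under bounded index is a semiprime $\pi$-regular ring and so a subdirect product of prime Goldie rings of bounded index; these are simple Artinian of uniformly bounded matrix size, where every element is strongly $\pi$-regular, and one lifts strong $\pi$-regularity back through the nil ideal $\mathrm{Nil}_{*}(\mathcal{R})$. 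The cases (ii) and (iii) follow in the same way, the respective one-sided $\pi$-regularity forcing regularity in the semiprime quotient; alternatively one may settle the left--right matching in one stroke by invoking Dischinger's symmetry theorem, which gives (ii)$\Leftrightarrow$(iii)$\Leftrightarrow$(iv) for arbitrary rings.

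The hard part will be exactly this upgrade from $\pi$-regular to strongly $\pi$-regular. The obstruction is that the idempotent $e$ produced by von Neumann regularity of $a^{n}$ need not commute with $a$, so one cannot directly split $a$ into an invertible part and a nilpotent part; the hypothesis of bounded index is what repairs this, first by forcing Dedekind finiteness (so that a one-sided inverse inside a corner is automatically two-sided) and then by bounding the length of the resulting Fitting-type decomposition. Everything else is bookkeeping with the equivalences supplied by Theorems \ref{6} and \ref{1}, Proposition \ref{1.4}, and Remark \ref{rk1}.
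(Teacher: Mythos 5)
Your bookkeeping is fine and matches what the paper calls the ``fundamental relations among these regularities'': the downward implications follow from the definitions, Theorems \ref{6} and \ref{1}, Azumaya's commuting inner inverse, and the observation recorded just before the corollary, so everything indeed reduces to (i),(ii),(iii) $\Rightarrow$ (iv). But at exactly that point the paper does not argue at all -- it cites [\cite{Azu}, Theorem 5], Azumaya's theorem that $\pi$-regularity (one- or two-sided) together with bounded index forces strong $\pi$-regularity. You instead attempt to reprove that theorem from scratch, and your sketch of it has a genuine gap.

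The gap is the subdirect-product step. Even granting that $\mathcal{S}=\mathcal{R}/\mathrm{Nil}_*(\mathcal{R})$ is semiprime, $\pi$-regular and of bounded index, and that it is a subdirect product of simple Artinian rings of uniformly bounded matrix size, you cannot conclude from this that $\mathcal{S}$ is strongly $\pi$-regular: strong $\pi$-regularity does not pass from the factors of a subdirect product to the subdirect product itself. Compare: $\mathbb{Z}$ is a subdirect product of the fields $\mathbb{Z}/p\mathbb{Z}$, all of index $1$, yet $\mathbb{Z}$ is not even $\pi$-regular. The componentwise solutions of $\bar{a}^{N}=\bar{a}^{N+1}\bar{y}_P$ in the factors need not be the images of a single element of $\mathcal{S}$, so the arrow ``subdirect product of simple Artinian rings of bounded size $\Rightarrow$ strongly $\pi$-regular'' is a non sequitur; any repair must re-use the $\pi$-regularity of $\mathcal{S}$ itself at this stage, which is precisely the content of Azumaya's argument that you were trying to bypass. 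Two further steps are asserted without proof and are not routine: that the prime factors inherit the index bound (bounded index does not pass to arbitrary quotients, e.g. $\mathbb{Z}\to\mathbb{Z}/4\mathbb{Z}$; it passes to quotients by nil ideals, but the minimal primes of a semiprime ring are not nil), and that strong $\pi$-regularity lifts from $\mathcal{R}/\mathrm{Nil}_*(\mathcal{R})$ back through the nil radical (true, but itself a nontrivial lemma). Your Dedekind-finiteness argument is correct as far as it goes, but it is never actually connected to the conclusion -- ``bounding the length of the resulting Fitting-type decomposition'' is not an argument. Finally, invoking Dischinger's symmetry theorem does legitimately give (ii) $\Leftrightarrow$ (iii) $\Leftrightarrow$ (iv) (indeed for arbitrary rings, with no index hypothesis), but that is a far deeper result than the citation the paper intends, and it still leaves (i) $\Rightarrow$ (iv) -- the one implication that genuinely uses bounded index -- unproved.
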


\begin{proof} It follows directly from [\cite{Azu}, Theorem 5] and the fundamental relations among these regularities.
\end{proof}

\section{Jacobson\textquoteright s lemma for the one-sided regularity and one-sided Drazin invertibility}
\label{sec3}

Throughout this section, we will investigate the Jacobson\textquoteright s lemma for one-sided regularity and one-sided Drazin invertibility. Let $\mathcal{A}$ be a unital Banach algebra and $a, b, c, d\in \mathcal{A}$. To simplify the expressions of formulas, we introduce the notation $\alpha:=1-ac$ and $\beta:=1-bd$, which satisfy the equalities
\begin{gather}\label{eq3.1}
acd=dbd\makebox{~~and~~}dba=aca. \tag{3.1}
\end{gather}
The equalities (\ref{eq3.1}) were first introduced by the author and his collaborator in [\cite{Yan3}], and they encompass several interesting special cases:

 (I) When $a=d$ and $b=c$, the equalities (\ref{eq3.1}) always hold. Thus, $\alpha$ and $\beta$ correspond to the original Jacobson\textquoteright s lemma.

 (II) When $a=d$ and $b=1$, the equalities (\ref{eq3.1}) become $aba=a^2$, which is of interest to operator theorists; see [\cite{Aiena}] for instance.

 (III) When $a=d$, the equalities (\ref{eq3.1}) become $aba=aca$, which was systematically studied in [\cite{Corach}, \cite{Zeng}].

\noindent Hence, we consider the one-sided regularity and one-sided Drazin invertibility of $\alpha$ and $\beta$ under conditions (\ref{eq3.1}) in this section.

Now we recall some definitions of regularities in Banach algebras or rings. An element $a\in \mathcal{A}$ is called \emph{regular} (or \emph{Von Neumann regular}) if there exists an element $x$ such that $axa=a$, while $a\in \mathcal{A}$ is termed \emph{left regular} (resp. \emph{right regular}) if there exists an element $x$ such that $xa^2=a$ (resp. $a^2x=a$), see Azumaya [\cite{Azu}] and Kaplansky [\cite{Kaplansky}]. Recently, an extension of Jacobson\textquoteright s lemma for Von Neumann regularity was derived in [\cite{Zeng}], providing an affirmative answer to a question posed by Corach, Duggal, and Harte in [\cite{Corach}]. In the following, we give a one-sided analogue of this result.

\begin{theorem}\label{4} Let $\alpha, \beta\in \mathcal{A}$ satisfy \emph{(\ref{eq3.1})}. Then $\alpha$ is left (resp. right) regular if and only if $\beta$ is left (resp. right) regular.
\end{theorem}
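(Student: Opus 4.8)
The plan is to produce, from a left regularity witness for $\alpha$, an explicit left regularity witness for $\beta$, using only the defining relations (\ref{eq3.1}). Recall that $\alpha=1-ac$ is left regular means there is some $u\in\mathcal{A}$ with $u\alpha^2=\alpha$, i.e. $u(1-ac)^2=1-ac$. I want to construct $v$ with $v\beta^2=\beta$ where $\beta=1-bd$. The natural ansatz, guided by the classical Jacobson constructions, is to look for $v$ of the form $1-d(\,\cdot\,)a$ or more generally $v=1+d\,w\,a$ for a suitable $w$ built from $u$ and the letters $a,b,c,d$; the idea is that conjugation-like passage between $\alpha$ and $\beta$ is mediated by the intertwining data $acd=dbd$ and $dba=aca$.

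The key computational step is to record how $\beta$ and its powers interact with $d$ and $a$. First I would verify the basic intertwining identities that follow from (\ref{eq3.1}): from $dba=aca$ one gets $\beta a = (1-bd)a = a - bda$ and I want to relate this to $a\alpha$ or $\alpha a$; similarly from $acd=dbd$ one controls $d\alpha$ versus $\beta d$. The goal is to establish clean one-sided intertwining relations of the shape $d\,\alpha = \beta\,d$ and $\alpha\,a = a\,\beta$ (or the appropriate variants these hypotheses actually yield), because once such relations hold, powers intertwine as $d\alpha^n=\beta^n d$ and $\alpha^n a = a\beta^n$, and then the left regular equation for $\alpha$ transports directly: starting from $u\alpha^2=\alpha$, multiply to move everything onto the $\beta$ side and read off $v$. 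I expect the correct witness to be $v = 1 + d\,u\,(\text{something})\,a$ assembled so that the cross terms telescope via (\ref{eq3.1}).

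The main obstacle is that (\ref{eq3.1}) gives only the two mixed relations $acd=dbd$ and $dba=aca$, not full commutation, so the naive intertwining $d\alpha=\beta d$ will likely not hold on the nose — there will be correction terms, and I must check that these corrections land inside the ideal generated by $\alpha$ (equivalently, that they are annihilated after multiplying by the regularity witness $u$). Concretely, $d(1-ac)=d-dac$ while $(1-bd)d=d-bd^2$, so the obstruction is the difference $bd^2-dac$, and I will need to exploit $acd=dbd$ (multiply on the right by $d$, or insert cleverly) to show this difference is compatible with the telescoping. Handling these discrepancies correctly, and ensuring the algebra closes so that $v\beta^2$ collapses exactly to $\beta$ rather than to $\beta$ plus a stray term, is the delicate part; by symmetry (swapping the roles $a\leftrightarrow d$, $b\leftrightarrow c$, which preserves the pair of conditions (\ref{eq3.1})) the converse direction and the right-regular case follow from the same computation with the analogous ansatz $v=1+a\,w\,d$, so I would prove the left-regular implication in one direction carefully and then invoke this symmetry to dispatch the remaining cases.
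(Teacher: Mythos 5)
Your overall strategy---transport an explicit regularity witness across intertwining identities---is also the paper's strategy, but the concrete commitments in your sketch would not survive the computation. The intertwining identities you aim for are oriented the wrong way: what (\ref{eq3.1}) actually yields is $d\beta=\alpha d$ (because $d-dbd=d-acd$) and $\beta\, bac=bac\,\alpha$ (because $bdbac=b(dba)c=b(aca)c=bacac$), and both hold \emph{exactly}, with no correction terms. The obstruction $bd^2-dac$ you worry about arises only because you paired the factors as $d\alpha$ versus $\beta d$, which is not a product the hypotheses control, and no insertion of $acd=dbd$ repairs it. Consequently your ansatz $v=1+dwa$ has the conjugators in the wrong places: the correction terms must have the shape $b(\cdot)d$, and the paper's witness is $y=1+bd+bacxd$, verified by pushing $\beta$'s past $d$ and past $bac$ via the two exact identities above. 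In fact no witness of your proposed shape can exist in general: take $a=c=0$ and, in $M_2(\mathbb{C})$, let $b=E_{12}$ and $d=E_{22}$ be matrix units. Then (\ref{eq3.1}) holds since $db=0$; $\alpha=1$ is left regular; and $\beta=1-E_{12}$ is invertible, hence left regular. But every element of the form $1+dwa$ equals $1$, and $1\cdot\beta^2=1-2E_{12}\neq 1-E_{12}=\beta$, so $\beta$ has no left-regularity witness of your form (while the paper's witness reduces here to $1+bd=\beta^{-1}$, as it should).

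The symmetry you invoke to dispatch the remaining cases also does not do what you claim. The swap $a\leftrightarrow d$, $b\leftrightarrow c$ does preserve the pair (\ref{eq3.1})---it exchanges the two identities---but it carries $\alpha=1-ac$ to $1-db$ and $\beta=1-bd$ to $1-ca$; it does \emph{not} exchange $\alpha$ and $\beta$. So the converse implication is not ``the same computation'' read through a substitution; the paper proves it by a separate dual computation with the differently shaped witness $x=1+ac+dybac$, in which the two identities of (\ref{eq3.1}) swap roles. (Your idea can be repaired, but only with an extra argument you do not supply: applying the already-proved forward implication three times, with $(a,b,c,d)$ replaced successively by $(b,d,d,b)$, $(d,c,b,a)$ and $(c,a,a,c)$, gives $1-bd$ left regular $\Rightarrow$ $1-db$ left regular $\Rightarrow$ $1-ca$ left regular $\Rightarrow$ $1-ac$ left regular.) The same objection applies to the right-regular case: the correct witness there is again of the form $1+bd+b(\cdot)d$, not $1+a(\cdot)d$; indeed the counterexample above (where $a=0$) kills that ansatz as well.
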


\begin{proof} Suppose that $1-ac$ is left regular. Then there exists an element $x$ such that $x(1-ac)^2=1-ac$. Put $y=1+bd+bacxd$. Thus
$$
\begin{aligned} bacxd(1-bd)(1-bd)&=bacx(1-ac)d(1-bd)=bacx(1-ac)^2d\\
&=bac(1-ac)d=bd(1-bd)bd.
\end{aligned}
$$
Hence
$$
\begin{aligned} y(1-bd)^2&=(1+bd+bacxd)(1-bd)^2\\
&=(1+bd)(1-bd)^2+bacxd(1-bd)^2\\
&=1-bd-(bd)^2+(bd)^3+bd(1-bd)bd\\
&=1-bd,
\end{aligned}
$$
 this implies that $1-bd$ is also left regular. Conversely, suppose that $1-bd$ is left regular with such inverse $y$. Then one can similarly check that $(1+ac+dybac)(1-ac)^2=1-ac$, this implies $1-ac$ is also left regular. The proof of right regularity is dual to that of left regularity.
\end{proof}

By taking $a=d$ in Theorem \ref{4}, we obtain the one-sided version of [\cite{Zeng}, Theorem 2.1]. Additionally, if we take $a=d$ and $b=c$ in Theorem \ref{4}, we obtain following corollary.

\begin{corollary} If $a, c\in \mathcal{A}$. Then $1-ac$ is left regular (resp. right regular) if and only if $1-ca$ is left regular (resp. right regular).
\end{corollary}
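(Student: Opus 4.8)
The plan is to recognize this statement as the specialization of Theorem \ref{4} corresponding to case (I) identified above. First I would set $a=d$ and $b=c$ in the framework of that theorem, so that $\alpha=1-ac$ and $\beta=1-bd=1-ca$. The two defining relations (\ref{eq3.1}), namely $acd=dbd$ and $dba=aca$, then both collapse to the tautology $aca=aca$ and hold without any hypothesis on $a$ and $c$. Consequently the hypotheses of Theorem \ref{4} are satisfied for free, and its conclusion reads exactly: $1-ac$ is left (resp. right) regular if and only if $1-ca$ is left (resp. right) regular. This is the classical one-sided Jacobson's lemma for Von Neumann regularity, now falling out as a direct instance of the more general intertwined setting.

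Since the passage through Theorem \ref{4} is entirely formal, there is essentially no obstacle here; the only thing to verify is the harmless observation that (\ref{eq3.1}) becomes automatic once $a=d$ and $b=c$, which is precisely observation (I) recorded before the theorem. If a self-contained argument were preferred over citing Theorem \ref{4}, I would instead unwind that proof in the present special case: starting from a left inner inverse $x$ with $x(1-ac)^2=1-ac$, the candidate $y=1+ca+cacxa$ (the image of $y=1+bd+bacxd$ under $b=c$, $d=a$) should satisfy $y(1-ca)^2=1-ca$, with the symmetric substitution handling the reverse implication and the right-regular case following dually. However, invoking Theorem \ref{4} directly is cleaner and avoids repeating the computation, so I would present the corollary simply as the reading-off of that theorem under the stated substitution.
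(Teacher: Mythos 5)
Your proposal is correct and takes exactly the paper's route: the paper obtains this corollary precisely by specializing Theorem \ref{4} to $a=d$ and $b=c$, under which both relations in (\ref{eq3.1}) reduce to $aca=aca$ and hold automatically, so the conclusion reads off directly. Your sanity check of the explicit inner inverse $y=1+ca+cacxa$ is a faithful specialization of the one in the theorem's proof, but it is not needed.
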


\begin{theorem}\label{5} Let $\alpha, \beta\in \mathcal{A}$ satisfy \emph{(\ref{eq3.1})}. Then

 \emph{(i)} $\alpha$ is left $\pi$-regular (resp. right $\pi$-regular) if and only if $\beta$ is left $\pi$-regular (resp. right $\pi$-regular).

 \emph{(ii)} $\alpha$ is left strongly $\pi$-regular (resp. right strongly $\pi$-regular) if and only if $\beta$ is left strongly $\pi$-regular (resp. right strongly $\pi$-regular).
\end{theorem}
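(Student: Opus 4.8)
The plan is to reduce both parts of Theorem~\ref{5} to the regularity transfer already established in Theorem~\ref{4}, using the key structural observation that the defining equalities (\ref{eq3.1}) are stable under passing to powers. For part~(i), recall that $\alpha$ is left $\pi$-regular exactly when there is an integer $n$ and an element $x$ with $x\alpha^{n+1}=\alpha^n$, equivalently $\alpha^n$ is left regular in the sense that $x(\alpha^n)\alpha^n$ recovers $\alpha^n$ after one application; more usefully, left $\pi$-regularity of $\alpha$ is the statement that $\alpha^n$ is left regular for some $n$. So the first step is to show that $\alpha^n=(1-ac)^n$ and $\beta^n=(1-bd)^n$ again fit the template of Theorem~\ref{4}. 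Concretely, I would look for elements $a',c',b',d'$ (built from $a,b,c,d$) such that $\alpha^n=1-a'c'$, $\beta^n=1-b'd'$ and the pair $(\alpha^n,\beta^n)$ satisfies $a'c'd'=d'b'd'$ and $d'b'a'=a'c'a'$.

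The natural guess, motivated by the special case $a=d$, $b=c$ where $\alpha=1-ac$ and $\beta=1-ca$ and the identity $(1-ac)^n=1-a(\text{something})c$, is that $\alpha^n$ and $\beta^n$ can be written in the form $1-a p c$ and $1-b q d$ respectively, where $p,q$ are polynomials arising from the binomial expansion; the equalities (\ref{eq3.1}) should then propagate because they are exactly the intertwining relations $acd=dbd$ and $dba=aca$ that let one ``slide'' factors of $ac$ past $d$ and turn them into factors of $bd$. The second step, therefore, is to verify that the new quadruple satisfies (\ref{eq3.1}); once that is done, applying Theorem~\ref{4} to the pair $(\alpha^n,\beta^n)$ immediately gives that $\alpha^n$ is left (resp.\ right) regular iff $\beta^n$ is, which is precisely part~(i). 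I expect the main obstacle to be carrying out this algebra cleanly: finding the explicit $p,q$ and checking the two intertwining identities for the $n$-th powers will require an inductive argument on $n$, repeatedly invoking $acd=dbd$ and $dba=aca$ to rewrite mixed products, and keeping track of which side each factor attaches to.

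For part~(ii), the plan is to upgrade the $\pi$-regularity transfer of part~(i) to strong $\pi$-regularity by adding the intertwining condition $axa=xa^2$ (resp.\ $aya=a^2y$) into the picture. By Theorem~\ref{1}, left strong $\pi$-regularity of $\alpha$ is equivalent to left Drazin invertibility, and by Remark~\ref{rk1} the left Drazin inverse can be written explicitly as $x^{p+1}\alpha^p$ from any left inner inverse $x$. So the strategy is: starting from a left inner inverse $x$ of $\alpha$ (an element with $\alpha x\alpha=x\alpha^2$ and $x\alpha^{p+1}=\alpha^p$), construct the corresponding candidate inner inverse $y$ of $\beta$ by transporting $x$ through the same substitution $y=1+bd+bacxd$ that worked in Theorem~\ref{4}, and then verify both the regularity relation $y\beta^{p+1}=\beta^p$ (which part~(i) essentially guarantees) and the extra commutation relation $\beta y\beta=y\beta^2$. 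I anticipate that the genuinely new work here is checking the intertwining relation $\beta y\beta=y\beta^2$ for the transported inner inverse; the regularity half is inherited from part~(i), but the ``$axa=xa^2$'' condition is not automatically preserved by the substitution and will have to be confirmed by a direct computation that again leans on the equalities (\ref{eq3.1}) to move factors across $d$. As a cleaner alternative, once part~(i) and Theorem~\ref{1} are in hand, I would try to deduce part~(ii) abstractly: $\alpha$ is left strongly $\pi$-regular iff it is left Drazin invertible iff (by the forthcoming Jacobson lemma for one-sided Drazin invertibility, or by a direct two-sided argument) $\beta$ is, thereby sidestepping the explicit inner-inverse bookkeeping entirely.
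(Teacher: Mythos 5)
Your plan is, in substance, the paper's own proof. For part (i) the paper carries out exactly the reduction you sketch: it writes $(1-ac)^n=1-ac_n$ and $(1-bd)^n=1-b_nd$, where $c_n$ and $b_n$ are explicit binomial-expansion polynomials (sums of terms $c(ac)^{i-1}$ and $(bd)^{i-1}b$), checks that the new quadruple $(a,b_n,c_n,d)$ again satisfies (\ref{eq3.1}) --- namely $ac_nd=db_nd$ and $db_na=ac_na$, which follow from the sliding identity $(ac)^id=d(bd)^i$ (itself an easy induction from $acd=dbd$) together with $dba=aca$ --- and then quotes Theorem \ref{4}. Your identification of left $\pi$-regularity of $\alpha$ with left regularity of $\alpha^n$ is the same reduction the paper uses. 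For part (ii) the paper likewise transports the inner inverse through $y=1+bd+bacxd$ and verifies the two identities $\beta y\beta=y\beta^2$ and $y\beta^{p+1}=\beta^p$ by direct computation, using $(1-ac)d=d(1-bd)$ and $(1-bd)bac=bac(1-ac)$.

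Two cautions, both about part (ii). First, your claim that the power relation $y\beta^{p+1}=\beta^p$ is ``essentially guaranteed by part (i)'' is not correct: part (i) only yields the existence of \emph{some} element witnessing left $\pi$-regularity of $\beta$ (the witness produced by applying Theorem \ref{4} to the powers is $1+b_nd+b_nac_nxd$, not your $y$), whereas left strong $\pi$-regularity demands that the \emph{same} element satisfy both defining identities. So the regularity half is genuinely new work for the transported $y$; in the paper it is the chain $bacxd(1-bd)^{p+1}=bacx(1-ac)^{p+1}d=bac(1-ac)^pd=bdbd(1-bd)^p$, which gives $y(1-bd)^{p+1}=[(1+bd)(1-bd)+bdbd](1-bd)^p=(1-bd)^p$. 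Since you do commit to verifying both relations, this is a misplaced expectation rather than a hole, but the check cannot be skipped. Second, your proposed ``cleaner alternative'' is circular in the paper's logical order: the Jacobson lemma for one-sided Drazin invertibility (Theorem \ref{3.5}) is itself deduced \emph{from} Theorem \ref{5}(ii) together with the one-sided Azumaya realization (Theorem \ref{1} and Remark \ref{rk1}), so it cannot be invoked to prove Theorem \ref{5}(ii). Keep the direct computation as your main (and only) route.
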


\begin{proof} (i) Set
$$b_n=\sum\limits_{i=1}^n\Big(\normalsize\begin{array}{ccc}
    n \\
    i \\
  \end{array}\Big)(-1)^i(bd)^{i-1}b~~~\makebox{and}~~~c_n=\sum\limits_{i=1}^i\Big(\normalsize\begin{array}{ccc}
    n \\
    i \\
  \end{array}\Big)(-1)^ic(ac)^{i-1}.$$
  Then $ac_nd=db_nd$ and $ac_na=db_na$. From Theorem \ref{4}, it follows that $(1-bd)^n=1-b_nd$ is left regular (resp. right regular) if and only if $(1-ac)^n=1-ac_n$ is left regular (resp. right regular). Thus the conclusion holds.

 (ii) Suppose that $1-ac$ is left strongly $\pi$-regular. Then there exist an integer $p$ and an element $x\in \mathcal{A}$ such that $(1-ac)x(1-ac)=x(1-ac)^2$ and $x(1-ac)^{p+1}=(1-ac)^p$. Set $y=1+bd+bacxd$. First, from
 $$
\begin{aligned} bacxd(1-bd)^2&=bacx(1-ac)^2d=bac(1-ac)d=bd(1-bd)bd
\end{aligned}
$$
and
$$
\begin{aligned} (1-bd)bacxd(1-bd)&=bac(1-ac)x(1-ac)d=bac(1-ac)d=bd(1-bd)bd,
\end{aligned}
$$
it follows that $bacxd(1-bd)^2=(1-bd)bacxd(1-bd)$. Then
$$
\begin{aligned} (1-bd)y(1-bd)&=(1-bd)(1+bd+bacxd)(1-bd)\\
&=(1-bd)(1+bd)(1-bd)+(1-bd)bacxd(1-bd)\\
&=(1+bd)(1-bd)^2+bacxd(1-bd)^2\\
&=(1+bd+bacxd)(1-bd^2)\\
&=y(1-bd)^2.
\end{aligned}
$$
Next, since
$$
\begin{aligned} bacxd(1-bd)^{p+1}&=bacx(1-ac)^{p+1}d=bac(1-ac)^pd\\
&=bacd(1-bd)^p=bdbd(1-bd)^p,
\end{aligned}
$$
we have
$$
\begin{aligned} y(1-bd)^{p+1}&=(1+bd+bacxd)(1-bd)^{p+1}\\
&=(1+bd)(1-bd)^{p+1}+bacxd(1-bd)^{p+1}\\
&=(1+bd)(1-bd)^{p+1}+bdbd(1-bd)^p\\
&=[(1+bd)(1-bd)+bdbd](1-bd)^p\\
&=(1-bd)^p.
\end{aligned}
$$
Hence $1-bd$ is also left strongly $\pi$-regular. Conversely, suppose that $1-bd$ is also left strongly $\pi$-regular, i.e. there exist an integer $p$ and an element $y\in \mathcal{A}$ such that $(1-bd)y(1-bd)=x(1-bd)^2$ and $x(1-bd)^{p+1}=(1-bd)^p$. The proof is analogous when you notice the relevant $x=1+ac+dybac$ corresponding to $1-ac$.
\end{proof}

Taking a special case $a=d$ and $b=c$ in Theorem \ref{5}, we obtain following result, which contains the one-sided version of [\cite{Lam}, Theorem 3.8].

\begin{corollary} Let $a, c\in \mathcal{A}$. Then $1-ac$ is left $\pi$-regular (resp. right $\pi$-regular, left strongly $\pi$-regular and right strongly $\pi$-regular) if and only if $1-ca$ is left $\pi$-regular (resp. right $\pi$-regular, left strongly $\pi$-regular and right strongly $\pi$-regular).
\end{corollary}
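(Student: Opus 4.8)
The plan is to obtain the corollary as an immediate specialization of Theorem~\ref{5}, so that no genuinely new computation is needed; the entire substance already resides in Theorems~\ref{4} and~\ref{5}. Concretely, I would invoke the section's standing framework with the choice $d=a$ and $b=c$. Under this choice $\alpha=1-ac$ and $\beta=1-bd=1-ca$, which are exactly the two elements whose one-sided regularities the corollary compares.

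First I would verify that the substitution $d=a$, $b=c$ satisfies the hypotheses (\ref{eq3.1}) of Theorem~\ref{5}. This is precisely case (I) in the list following (\ref{eq3.1}), where it is recorded that (\ref{eq3.1}) always hold: indeed $acd=aca$ and $dbd=aca$ coincide, while $dba=aca$ matches $aca$, so both required identities collapse to the tautology $aca=aca$. Hence the pair $(\alpha,\beta)=(1-ac,\,1-ca)$ is admissible input for Theorem~\ref{5}.

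With this checked, I would finish by quoting Theorem~\ref{5} directly: part~(i) yields the equivalence of left $\pi$-regularity of $1-ac$ and $1-ca$ (resp.\ of right $\pi$-regularity), and part~(ii) yields the corresponding equivalence for left and for right strongly $\pi$-regularity. This disposes of all four assertions at once. The only point requiring care is pure bookkeeping --- confirming that the roles of $a,b,c,d$ are assigned so that $\beta$ becomes $1-ca$ rather than $1-ac$ --- and I expect no real obstacle, since the nontrivial work (constructing the inner inverses $y=1+bd+bacxd$ and $x=1+ac+dybac$, and propagating the binomial coefficients through the powers via Theorem~\ref{4}) has already been carried out in the proof of Theorem~\ref{5}. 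I would also remark that, as promised in the surrounding text, this recovers the one-sided form of [\cite{Lam}, Theorem 3.8].
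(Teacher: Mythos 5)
Your proposal is correct and is exactly the paper's argument: the corollary is stated there as an immediate specialization of Theorem~\ref{5} under the choice $a=d$, $b=c$ (case~(I) following~(\ref{eq3.1}), where the hypotheses hold tautologically), so that $\alpha=1-ac$ and $\beta=1-ca$. Your bookkeeping check of the substitution matches what the paper leaves implicit, and nothing further is needed.
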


The next two results illustrate the one-sided analogues of the formulas in [\cite{Yan1}, Theorem 3.1]. It should be noted that the Jacobson\textquoteright s lemma for one-sided Drazin inverse has been obtained by Ren and Jiang in [\cite{Ren}]. However, we utilize our one-sided analogue of the \textquotedblleft Azumaya realization\textquotedblright~ to simplify the proof of this result.

\begin{theorem} \label{3.5} Let $\alpha, \beta\in \mathcal{A}$ satisfy \emph{(\ref{eq3.1})}. $\alpha$ is left (resp. right) Drazin invertible with index $k$ if and only if $\beta$ is left (resp. right) Drazin invertible with index $k$. More precisely, if $\alpha$ has a left (resp. right) Drazin inverse $x$, then
$$y=(1-bacprd)(1+bd)+bacxd$$
is a left (resp. right) Drazin inverse of $\beta$. Conversely, if $\beta$ has a left (resp. right) Drazin inverse $y$, then
$$x=(1-dp'r'bac)(1+ac)+dybac$$
is a left (resp. right) Drazin inverse of $\alpha$. In both two cases, $r=\sum_{j=0}^{k-1}[1-(ac)^{2}]^{j}$, $r'=\sum_{j=0}^{k-1}[1-(bd)^{2}]^{j}$, $p=1-x\alpha$ and $p'=1-y\beta$ (resp. $p=1-\alpha x$ and $p'=1-\beta y$).
\end{theorem}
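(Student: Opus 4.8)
The plan is not to verify the three defining relations of a left Drazin inverse directly for the unwieldy element $y$, but to reduce everything to the strong $\pi$-regularity transfer of Theorem \ref{5}(ii) via the one-sided ``Azumaya realization'' of Theorem \ref{1}. The first observation is that a left Drazin inverse $x$ of $\alpha$ is automatically a left inner inverse certifying that $\alpha$ is left strongly $\pi$-regular of index $k$, since the relations $\alpha x\alpha=x\alpha^2$ and $x\alpha^{k+1}=\alpha^k$ are precisely two of the three defining relations of a left Drazin inverse. Conversely, by the proof of Theorem \ref{1} the minimal left strongly $\pi$-regular index equals the left Drazin index. Thus ``left Drazin invertible of index $k$'' and ``left strongly $\pi$-regular of index $k$'' are interchangeable for both $\alpha$ and $\beta$.

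Granting this, the equivalence is immediate: Theorem \ref{5}(ii) shows that $\alpha$ is left strongly $\pi$-regular iff $\beta$ is, and its two constructions, the forward one $y_0=1+bd+bacxd$ (built from an inner inverse $x$ of $\alpha$) and the backward one $x_0=1+ac+dybac$ (built from an inner inverse $y$ of $\beta$), preserve the witnessing index. Hence $\alpha$ and $\beta$ admit left inner inverses of exactly the same indices, their minimal indices agree, and so do their left Drazin indices; this yields the ``if and only if with index $k$'' assertion.

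To obtain the closed form I start from a left Drazin inverse $x$ of $\alpha$, feed it into Theorem \ref{5}(ii) to get the left inner inverse $y_0=1+bd+bacxd$ of $\beta$ with $y_0\beta^{k+1}=\beta^k$, and then invoke Remark \ref{rk1}: the genuine left Drazin inverse of $\beta$ is $y_0^{\,k+1}\beta^{k}$. The remaining work is the algebraic collapse of $y_0^{\,k+1}(1-bd)^{k}$ into $(1-bacprd)(1+bd)+bacxd$. The relations (\ref{eq3.1}) drive this: they give the basic reduction $bacd=(bd)^2$ and, through the binomial sums $b_n,c_n$ from the proof of Theorem \ref{5}(i), let each mixed term $bac\,x^{i}(\cdots)d$ be rewritten so that powers of $ac$ trapped between $b$ and $d$ turn into powers of $bd$. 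In this reduction the factor $p=1-x\alpha$ surfaces as the remainder annihilating $\alpha^{k}$, since $p\alpha^{k}=\alpha^{k}-x\alpha^{k+1}=0$, while the geometric-type sum $r=\sum_{j=0}^{k-1}[1-(ac)^2]^{j}$ is produced by telescoping, using $r(ac)^2=1-[1-(ac)^2]^{k}$, the chain of substitutions $\alpha x\alpha=x\alpha^2$ unwound $k$ times.

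I expect the genuine difficulty to lie entirely in this last bookkeeping: controlling the non-commuting factors $x$, $ac$ and $bd$ while expanding the $(k+1)$-fold product, and recognising the precise geometric series so that it condenses into the compact symbols $p$ and $r$ rather than an unwieldy sum. Once the left case is finished, the converse formula $x=(1-dp'r'bac)(1+ac)+dybac$ follows from the parallel construction $x_0=1+ac+dybac$ of Theorem \ref{5}(ii) by the identical simplification, and the right Drazin statements are obtained by the dual argument, so no further ideas are needed.
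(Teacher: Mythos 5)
Your proposal follows the paper's proof essentially verbatim: the paper likewise passes from the left Drazin inverse $x$ of $\alpha$ to the left inner inverse $1+bd+bacxd$ of $\beta$ via Theorem \ref{5}(ii), invokes Remark \ref{rk1} to obtain $(1+bd+bacxd)^{k+1}\beta^{k}$ as a left Drazin inverse of $\beta$ with the same index, and then collapses this expression to the stated closed form (with the converse and the right-handed case done dually). The bookkeeping you defer is organized in the paper around the single identity $(1+bd+bacxd)\beta=1-bacpd$ (a consequence of $acd=dbd$), combined with $xp=0$, $p(ac)=(ac)p$ and $dbac=(ac)^2$, which give $(1+bd+bacxd)^{k+1}\beta^{k}=(1+bd+bacxd)(1-bacpd)^{k}$ and then the geometric collapse $(1-bacpd)^{k}=1-bacprd$, exactly the telescoping you anticipated.
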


\begin{proof} We only present the proof for left Drazin invertibility, as the proof for right Drazin invertibility is dual. Suppose that $x$ is a left Drazin inverse of $\alpha$. Then $x$ is also a left inner inverse of $\alpha$. From the proof of Theorem \ref{5}, it follows that $1+bd+bacxd$ is a left inner inverse of $\beta$. By Remark \ref{rk1}, $[1+bd+bacxd]^{k+1}\beta^k$ is a left Drazin inverse of $\beta$. Set $y:=(1+bd+bacx'd)^{k+1}\beta^k$ and $p:=1-x\alpha$. Since $\alpha x\alpha=x(\alpha)^2,$
we have $(ac)(x\alpha)=(x\alpha)(ac)$, which implies $(ac)p=p(ac)$. Hence
$$bacxd(1-bacpd)=bacxd-bacxdbacpd=bacxd-bacxp(ac)^2d=bacxd.$$
Also $(1+bd+bacx'd)\beta=1-bacpd$, and so
$$
\begin{aligned} y&=(1+bd+bacxd)^{k+1}\beta^k=(1+bd+bacxd)(1-bacpd)^k\\
&=(1+bd)(1-bacpd)^k+bacxd(1-bacpd)^k\\
&=(1-bacpd)^k(1+bd)+bacxd\\
&=(1-bacprd)(1+bd)+bacxd,
\end{aligned}
$$
where $r=\sum_{j=0}^{k-1}[1-(ac)^{2}]^{j}$. Conversely, one can get the conclusion by similar proof and computation as above.
\end{proof}

By taking the one-sided Drazin index $k=1$, we immediately get the following result about one-sided group inverse.

\begin{theorem} Let $\alpha, \beta\in \mathcal{A}$ satisfy \emph{(\ref{eq3.1})}. Then $\alpha$ is left (resp. right) group invertible if and only if $\beta$ is left (resp. right) group invertible. More precisely, if $\alpha$ has a left (resp. right) group inverse $x$, then
$$y=(1-bacpd)(1+bd)+bacxd$$
is a left (resp. right) group inverse of $\beta$. Conversely, if $\beta$ has a left (resp. right) inverse $y$, then
$$x=(1-dp'bac)(1+ac)+dybac$$
is a left (resp. right) inverse of $\alpha$. In both two cases, $p=1-x\alpha$ and $p'=1-y\beta$ (resp. $p=1-\alpha x$ and $p'=1-\beta y$).
\end{theorem}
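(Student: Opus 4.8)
The plan is to obtain this statement as the special case $k=1$ of Theorem \ref{3.5}, which I am free to invoke since it appears earlier. By definition, an element of $\mathcal{A}$ is left (resp. right) group invertible precisely when it is left (resp. right) Drazin invertible with index $1$; indeed, the paper introduces one-sided group invertibility exactly by setting the one-sided Drazin index $j=1$. Consequently, the equivalence ``$\alpha$ is left (resp. right) group invertible if and only if $\beta$ is left (resp. right) group invertible'' is nothing but the $k=1$ instance of the equivalence already established in Theorem \ref{3.5}, so no new argument is needed for the qualitative assertion.

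For the explicit formulas, I would substitute $k=1$ into the expressions furnished by Theorem \ref{3.5}. In that theorem one has $r=\sum_{j=0}^{k-1}[1-(ac)^2]^j$ and $r'=\sum_{j=0}^{k-1}[1-(bd)^2]^j$; when $k=1$ each sum collapses to its single $j=0$ term, so that
$$r=\sum_{j=0}^{0}[1-(ac)^2]^j=1 \makebox{~~and~~} r'=\sum_{j=0}^{0}[1-(bd)^2]^j=1.$$
Inserting $r=1$ into $y=(1-bacprd)(1+bd)+bacxd$ yields $y=(1-bacpd)(1+bd)+bacxd$, and inserting $r'=1$ into $x=(1-dp'r'bac)(1+ac)+dybac$ yields $x=(1-dp'bac)(1+ac)+dybac$, which are exactly the two formulas asserted here. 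The auxiliary definitions $p=1-x\alpha$, $p'=1-y\beta$ (resp. $p=1-\alpha x$, $p'=1-\beta y$) are inherited verbatim from Theorem \ref{3.5}.

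Because everything reduces to a direct specialization, there is no genuine obstacle in this proof. The only points that require a moment of care are the bookkeeping identifications: first, that the index-$1$ hypothesis in Theorem \ref{3.5} coincides with one-sided group invertibility, and second, that the degenerate sums defining $r$ and $r'$ indeed equal the identity $1$. Once these are noted, the stated expressions for $y$ and $x$ drop out immediately, together with the preservation of the index $k=1$.
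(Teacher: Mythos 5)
Your proposal is correct and coincides with the paper's own treatment: the paper derives this theorem exactly as the $k=1$ specialization of Theorem \ref{3.5}, with the geometric sums $r$ and $r'$ degenerating to $1$ so that the displayed formulas for $y$ and $x$ follow immediately. No further comment is needed.
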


We now explore Jacobson\textquoteright s lemma for one-sided generalized Drazin inverses in Banach algebras. The reason for considering this problem within the realm of Banach algebras is that the question of whether there exists a suitable definition for one-sided generalized Drazin inverses in rings remains unresolved, see Remark \ref{r1}.

\begin{theorem}\label{3.6} Let $\alpha,~\beta\in \mathcal{A}$ satisfy \emph{(\ref{eq3.1})}. Then $\alpha$ is left (resp. right) generalized Drazin invertible if and only if $\beta$ is left (resp. right) generalized Drazin invertible. More precisely, if $\alpha$ has a left (resp. right) generalized Drazin inverse $x$, then
$$y=(1-bacp[1-p\alpha(1+ac)]^{-1}d)(1+bd)+bacxd$$
is a left (resp. right) generalized Drazin inverse of $\beta$. Conversely, if $\beta$ has a left (resp. right) generalized Drazin inverse $y$, then
$$x=(1-dp'[1-p'\beta(1+ac)]^{-1}bac)(1+ac)+dybac$$
is a left (resp. right) generalized Drazin inverse of $\alpha$. In both two cases, $p=1-x\alpha$ and $p'=1-y\beta$ (resp. $p=1-\alpha x$ and $p'=1-\beta y$).
\end{theorem}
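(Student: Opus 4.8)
The plan is to follow the same route as the proof of Theorem \ref{3.5}, replacing the finite geometric sum $r=\sum_{j=0}^{k-1}[1-(ac)^2]^j$ (which terminated only because of the finite Drazin index) by a convergent Neumann series. The slick argument of Theorem \ref{3.5}, where Remark \ref{rk1} let us pass from an inner inverse to a genuine Drazin inverse via the finite power $w^{k+1}\beta^k$, is no longer available, since quasi-nilpotent is not nilpotent; so I would instead verify the three defining relations for the stated $y$ directly. The entire purpose of the quasi-nilpotent hypothesis is to guarantee that $1-p\alpha(1+ac)$ in the formula is invertible, so the first task is to make sense of that inverse and the last task is to transfer the quasi-nilpotent condition across the Jacobson correspondence.

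First I would record the algebra of $p=1-x\alpha$. From $\alpha x\alpha=x\alpha^2$ one gets that $x\alpha$ commutes with $\alpha$, hence (using $x^2\alpha=x$) that $p$ is idempotent and commutes with $\alpha$ and with $ac=1-\alpha$. Moreover $\alpha p=p\alpha=\alpha-\alpha x\alpha=-(\alpha x\alpha-\alpha)\in\mathcal{A}^{qnil}$. Since $p$ commutes with $1+ac$, the element $q:=p\alpha(1+ac)=\alpha p(1+ac)$ is a product of commuting elements one of which is quasi-nilpotent, so $r(q)\le r(\alpha p)\,r(1+ac)=0$ and $1-q$ is invertible. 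A short Neumann-series computation then gives $p[1-q]^{-1}=p+\sum_{j\ge 1}[p\alpha(1+ac)]^{j}$, which is exactly the ``infinite'' analogue of the finite $pr$ used in Theorem \ref{3.5}; checking that the stated $y$ collapses to the $y$ of Theorem \ref{3.5} when $q$ is nilpotent is the consistency test I would use to be confident the formula is correct.

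Next I would verify the three relations for $\beta$. The intertwining relation $\beta y\beta=y\beta^2$ and the relation $y^2\beta=y$ I would obtain by the same bookkeeping as in Theorems \ref{5} and \ref{3.5}: the core element $w:=1+bd+bacxd$ already satisfies $(1-bd)w(1-bd)=w(1-bd)^2$ and, using $acd=dbd$ and $dba=aca$, one computes $w\beta=1-bacpd$. The correction factor $1-bacp[1-q]^{-1}d$ is engineered precisely so that the Neumann series absorbs the powers of $bacpd$ the way $r$ did in the finite case; verifying $y^2\beta=y$ is the most calculation-heavy step and is where the exact shape of this factor is forced.

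The main obstacle is the quasi-nilpotent condition $\beta y\beta-\beta\in\mathcal{A}^{qnil}$. The key move is to reduce it to showing that an element of the form $bac\,(\alpha p)\,d$ is quasi-nilpotent; indeed for $w$ itself one finds $w\beta^2-\beta=-bac(\alpha p)d$ using $d\beta=d-dbd=d-acd=\alpha d$. Quasi-nilpotency of this product is not immediate because $bac$ and $d$ need not commute with $\alpha p$, so I would invoke the identity $r(uv)=r(vu)$ to cycle $d(\cdots)bac$ into $dbac=(ac)^2$ (from $dba=aca$), reducing to $r(\alpha p(ac)^2)\le r(\alpha p)\,r((ac)^2)=0$ by the commutativity already established. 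Finally, the converse implication follows by the symmetric formula for $x$ upon interchanging the roles of $(a,c,\alpha)$ and $(b,d,\beta)$, and the right-handed statement is dual; hence only the left ``only if'' direction requires the full argument.
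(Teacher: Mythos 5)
Your proposal is correct and follows essentially the same route as the paper's proof: the paper likewise verifies the three defining conditions for the stated $y$ directly, using the invertibility of $1-p\alpha(1+ac)$ (which, as you note, rests on the quasi-nilpotency of $\alpha p$ and its commutation with $ac$), the identity $xp=0$ to obtain $y^2\beta=y$, and exactly your cycling argument $r(uv)=r(vu)$ combined with $dbac=(ac)^2$ and commutativity to get the quasi-nilpotency of $\beta-\beta y\beta$. The only difference is presentational: you make explicit the Neumann-series collapse to the finite sum $pr$ of Theorem \ref{3.5}, which the paper leaves implicit.
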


\begin{proof} We only present the proof for left generalized Drazin invertibility, as the proof for right generalized Drazin invertibility is dual. In order to prove that $y$ is a left generalized Drazin inverse of $\beta$, it suffices to show that the following three conditions hold: (i) $\beta y\beta=y\beta^2$; (ii) $y^2\beta=y$; (iii) $\beta-\beta y\beta$ is quasi-nilpotent.

(i) Since $p$ and $ac$ commute, $1-p\alpha(1+ac)$ is invertible. Thus
$$
\begin{aligned}   y\beta&=1-(bd)^{2}-bacp[1-p\alpha(1+ac)]^{-1}d(1+bd)(1-bd)+bacxd(1-bd)\\
&=1-[bacd-bacx\alpha d]-bacp[1-p\alpha(1+ac)]^{-1}\alpha(1+ac)d\\
&=1-bacpd-bacp[1-p\alpha(1+ac)]^{-1}\alpha(1+ac)d\\
&=1-bacp\{1+[1-p\alpha(1+ac)]^{-1}p\alpha(1+ac)\}d\\
&=1-bacp[1-p\alpha(1+ac)]^{-1}d
\end{aligned}
$$
and so
$$
\begin{aligned} \beta y\beta&=\beta-\beta bacp[1-p\alpha(1+ac)]^{-1}d=\beta-bac\alpha p[1-p\alpha(1+ac)]^{-1}d\\
&=\beta-bac[1-p\alpha(1+ac)]^{-1}\alpha d=\beta-bac[1-p\alpha(1+ac)]^{-1}d\beta=y\beta^2.
\end{aligned}
$$

(ii) From the commutativity of $ac$ and $p$, it follows that
$$
\begin{aligned} (bacxd)bacp[1-p\alpha(1+ac)]^{-1}d=bac(xp)(ac)^2[1-p\alpha(1+ac)]^{-1}d.\\
\end{aligned}
$$
Since $x$ is a left generalized Drazin inverse of $\alpha$, $xp=0$ and so
$$(bacxd)bacp[1-p\alpha(1+ac)]^{-1}d=0.$$
Hence, by the commutativity of $ac$ and $p$ again,
$$
\begin{aligned} y^2\beta&=y-ybacp[1-p\alpha(1+ac)]^{-1}d\\
&=y-bac(1+ac)p[1-p\alpha(1+ac)]^{-1}d+bac(1+ac)p[1-p\alpha(1+ac)]^{-2}(ac)^2d\\
&=y-bac(1+ac)p[1-p\alpha(1+ac)]^{-2}[p-p\alpha(1+ac)-p(ac)^2]d\\
&=y.
\end{aligned}
$$

(iii) From the proof of part (i), we have
$$\beta-\beta y\beta=bac\alpha p[1-p\alpha(1+ac)]^{-1}d.$$
Therefore
$$
\begin{aligned} r(\beta-\beta y\beta)&=r(bac\alpha p[1-p\alpha(1+ac)]^{-1}d)\leq r([1-p\alpha(1+ac)]^{-1}dbac\alpha p)\\
&=r([1-p\alpha(1+ac)]^{-1}acac\alpha p)\leq r([1-p\alpha(1+ac)]^{-1}acac)r(\alpha p)\\
&=r([1-p\alpha(1+ac)]^{-1}acac)r(\alpha-\alpha x\alpha)=0,
\end{aligned}
$$
which implies that $\beta-\beta y\beta$ is quasi-nilpotent. The proof of the opposite implication is parallel to that above.
\end{proof}

\begin{corollary} \label{310} Let $a, c\in \mathcal{A}$. Then

\emph{(i)} $1-ac$ is left (resp. right) Drazin invertible with index $k$ if and only if $1-ca$ is left (resp. right) Drazin invertible with index $k$.

\emph{(ii)} $1-ac$ is left (resp. right) group invertible if and only if $1-ca$ is left (resp. right) group invertible.

\emph{(iii)} $1-ac$ is left (resp. right) generalized Drazin invertible if and only if $1-ca$ is left (resp. right) generalized Drazin invertible.
\end{corollary}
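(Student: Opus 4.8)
The plan is to obtain this corollary as the direct specialization of the Jacobson's-lemma results already proved in this section: part (i) from Theorem \ref{3.5}, part (ii) from the intermediate theorem on one-sided group inverses (the $k=1$ case of Theorem \ref{3.5}), and part (iii) from Theorem \ref{3.6}. The pivotal observation is the special case (I) recorded just after the equalities (\ref{eq3.1}): upon setting $d=a$ and $b=c$, both relations $acd=dbd$ and $dba=aca$ collapse to the tautology $aca=aca$, so they are satisfied automatically for \emph{every} pair $a,c\in\mathcal{A}$, with no side condition whatsoever.

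Under this substitution I would simply record how the pair $(\alpha,\beta)$ specializes. The element $\alpha=1-ac$ is unchanged, while $\beta=1-bd$ becomes $1-ca$. Hence the abstract pair $(\alpha,\beta)$ appearing in Theorems \ref{3.5} and \ref{3.6} is precisely the pair $(1-ac,\,1-ca)$ of the corollary. Each of the three biconditionals then transfers verbatim: left/right Drazin invertibility with the index $k$ preserved, left/right group invertibility, and left/right generalized Drazin invertibility all pass between $1-ac$ and $1-ca$. If one wishes, the explicit formulas for the transferred one-sided inverses in those theorems also specialize accordingly under $d=a$, $b=c$, but the corollary as stated requires only the equivalences.

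There is essentially no obstacle to overcome here; the entire content is the trivial verification that $d=a$, $b=c$ forces $\beta=1-ca$ and reduces (\ref{eq3.1}) to an identity, which is exactly case (I) already established. Consequently the argument reduces to a single invocation of Theorems \ref{3.5} and \ref{3.6}, together with the intervening group-inverse theorem, at this choice of parameters.
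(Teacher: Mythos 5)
Your proposal is correct and takes essentially the same route as the paper: the corollary is precisely the specialization $d=a$, $b=c$ of Theorems \ref{3.5} and \ref{3.6} (and the intervening group-inverse theorem), using the observation recorded as case (I) that this choice makes the hypotheses (\ref{eq3.1}) hold identically and turns $\beta=1-bd$ into $1-ca$.
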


At the end of this section, we provide the spectral identity of one-side (generalized) Drazin inverse in Banach algebras. Now we define
$$\sigma_{ld}(a):=\{\lambda\in \mathbb{C}: \lambda-a \makebox{~is~not~left~Drazin~invertible~in}~\mathcal{A}\},$$
$$\sigma_{rd}(a):=\{\lambda\in \mathbb{C}: \lambda-a \makebox{~is~not~right~Drazin~invertible~in}~\mathcal{A}\},$$
$$\sigma_{lg}(a):=\{\lambda\in \mathbb{C}: \lambda-a \makebox{~is~not~left~group~invertible~in}~\mathcal{A}\},$$
$$\sigma_{rg}(a):=\{\lambda\in \mathbb{C}: \lambda-a \makebox{~is~not~right~group~invertible~in}~\mathcal{A}\},$$
$$\sigma_{lgd}(a):=\{\lambda\in \mathbb{C}: \lambda-a \makebox{~is~not~left~generalized~Drazin~invertible~in}~\mathcal{A}\},$$
and
$$\sigma_{rgd}(a):=\{\lambda\in \mathbb{C}: \lambda-a \makebox{~is~not~left~generalized~Drazin~invertible~in}~\mathcal{A}\}.$$
By corollary \ref{310}, we immediately obtain the following spectral identity for one-sided (generalized) Drazin spectrum.

\begin{corollary} \label{3.11} Let $a, c\in \mathcal{A}$. Then
$$\sigma_*(ac)\setminus \{0\}=\sigma_*(ca)\setminus \{0\},$$
where $*$=$\{$ld, rd, lg, rg, lgd, rgd$\}$.
\end{corollary}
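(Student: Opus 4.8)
The plan is to deduce Corollary \ref{3.11} directly from Corollary \ref{310}, which already provides the equivalence of one-sided (generalized) Drazin invertibility for $1-ac$ and $1-ca$. The key observation is that for each fixed nonzero $\lambda\in\mathbb{C}$, the element $\lambda-ac$ can be written as $\lambda(1-\lambda^{-1}ac)$, and since $\lambda$ is a nonzero scalar (hence invertible in $\mathcal{A}$), multiplication by $\lambda$ preserves every one of the six one-sided invertibility properties. Thus $\lambda-ac$ is left (resp. right, group, generalized) Drazin invertible if and only if $1-\lambda^{-1}ac$ is.

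First I would set $a':=\lambda^{-1}a$ and $c':=c$, so that $1-\lambda^{-1}ac = 1-a'c'$ and $1-\lambda^{-1}ca = 1-c'a'$. Applying Corollary \ref{310} to the pair $(a',c')$ yields, for each of the properties indexed by $*\in\{\mathrm{ld},\mathrm{rd},\mathrm{lg},\mathrm{rg},\mathrm{lgd},\mathrm{rgd}\}$, the equivalence
$$
1-\lambda^{-1}ac \text{ has property } * \iff 1-\lambda^{-1}ca \text{ has property } *.
$$
Combining this with the scalar-multiple reduction from the previous paragraph gives $\lambda-ac$ has property $*$ if and only if $\lambda-ca$ has property $*$. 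Translating into the language of spectra, $\lambda\notin\sigma_*(ac)$ if and only if $\lambda\notin\sigma_*(ca)$, for every $\lambda\neq 0$. This is exactly the asserted identity $\sigma_*(ac)\setminus\{0\}=\sigma_*(ca)\setminus\{0\}$.

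The only genuinely delicate point, and the one I would address explicitly rather than waving through, is the verification that left/right (generalized) Drazin invertibility is invariant under multiplication by a nonzero scalar, i.e. that $u$ has property $*$ if and only if $\lambda u$ does. For the Drazin and group cases this is transparent: if $x$ is a left Drazin inverse of $u$ with index $k$, then one checks from the defining relations $uxu=xu^2$, $x^2u=x$, $xu^{k+1}=u^k$ that $\lambda^{-1}x$ serves as a left Drazin inverse of $\lambda u$ with the same index, since each relation is homogeneous and the powers match up after inserting the appropriate power of $\lambda$. The generalized cases require the additional remark that $\mathcal{A}^{qnil}$ is invariant under nonzero scalar multiplication, which is immediate because $r(\lambda z)=|\lambda|\,r(z)$, so $u-uxu\in\mathcal{A}^{qnil}$ forces $\lambda u-(\lambda u)(\lambda^{-1}x)(\lambda u)=\lambda(u-uxu)\in\mathcal{A}^{qnil}$.

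I expect no substantial obstacle here: the whole corollary is a formal consequence of Corollary \ref{310} together with this homogeneity bookkeeping. The main thing to be careful about is simply to confirm that the scalar reduction is applied uniformly across all six spectra, and in particular that the index $k$ is genuinely preserved in the (non-generalized) Drazin and group cases so that the spectral identity holds simultaneously for the index-tracking variants. Once the scalar invariance is recorded as a short lemma-style remark, the proof reduces to a one-line invocation of Corollary \ref{310} for each $\lambda\neq 0$.
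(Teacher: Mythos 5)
Your proposal is correct and follows essentially the same route as the paper: the paper derives Corollary \ref{3.11} directly from Corollary \ref{310} (leaving the standard reduction implicit), and your argument simply spells out that reduction, namely writing $\lambda-ac=\lambda\bigl(1-(\lambda^{-1}a)c\bigr)$ for $\lambda\neq 0$, applying Corollary \ref{310} to the pair $(\lambda^{-1}a,\,c)$, and checking scalar invariance of the six properties. The homogeneity verification you include (including $r(\lambda z)=|\lambda|\,r(z)$ for the quasi-nilpotent condition) is exactly the bookkeeping the paper treats as immediate.
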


It is also interesting to consider whether the Cline formula holds for one-sided (generalized) Drazin inverses in Banach algebras, namely the relation between the zero points of the one-sided (generalized) Drazin spectrum of element products $ac$ and $ca$ in Banach algebras. However, we conjecture that Cline formula may not hold for one-sided (generalized) Drazin inverses.

\begin{question} Does the Cline formula hold for one-sided (resp. generalized) Drazin inverses in rings (resp. Banach algebras)?
\end{question}

We now  present Cline formula for one-sided Drazin inverses under certain additional conditions. A comparable result concerning one-sided generalized Drazin inverses can be derived through analogous computations.

\begin{proposition} Let $a, c\in \mathcal{A}$

\emph{(i)} If $ac$ is left Drazin invertible with index $k$ and $c$ is right invertible, then $ca$ is left Drazin invertible with index $k+1$;

\emph{(ii)} If $ac$ is right Drazin invertible with index $k$ and $a$ is left invertible, then $ca$ is right Drazin invertible with index $k+1$;
\end{proposition}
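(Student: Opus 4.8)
The plan is to give, in both parts, a single explicit formula for the one-sided Drazin inverse of $ca$ built from that of $ac$, in the spirit of Cline's formula, and to verify the defining relations directly; the one-sided invertibility hypothesis will be used exactly once, to cancel a factor. Write $u=ac$ and let $w$ be a left Drazin inverse of $ac$ realizing the index $k$, so that
$$uwu=wu^{2},\qquad w^{2}u=w,\qquad wu^{k+1}=u^{k}.$$
I set $z=cw^{2}a$ and claim that $z$ is a left Drazin inverse of $ca$ satisfying $z(ca)^{k+2}=(ca)^{k+1}$. Throughout I use the elementary identities $(ca)^{n}c=c(ac)^{n}$, $a(ca)^{n}=(ac)^{n}a$ and $(ca)^{n}=c(ac)^{n-1}a$ for $n\ge 1$, all obtained by regrouping.

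The two ``diagonal'' relations come out with no extra hypothesis. For $z^{2}(ca)=z$ I compute $z^{2}(ca)=cw^{2}(ac)w^{2}(ac)a=c(w^{2}u)(w^{2}u)a=cw^{2}a=z$, using $w^{2}u=w$ twice. For the index relation I use $a(ca)^{k+2}=(ac)^{k+2}a=u^{k+2}a$ to get $z(ca)^{k+2}=cw^{2}u^{k+2}a$, and then $w^{2}u^{k+2}=(w^{2}u)u^{k+1}=wu^{k+1}=u^{k}$ gives $z(ca)^{k+2}=cu^{k}a=(ca)^{k+1}$; hence the left Drazin index of $ca$ is at most $k+1$.

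The crux is the intertwining relation $(ca)z(ca)=z(ca)^{2}$, and this is where right invertibility of $c$ enters. Using $uw^{2}u=uw$ and $w^{2}u^{2}=wu$ one reduces the two sides to $(ca)z(ca)=c(uw)a$ and $z(ca)^{2}=c(wu)a$, so it suffices to prove $c(uw-wu)a=0$. Put $N=uw-wu$. The relation $uwu=wu^{2}$ says precisely that $Nu=0$, i.e. $Nac=0$, so $(Na)c=0$; multiplying on the right by a right inverse $c'$ of $c$ (with $cc'=1$) yields $Na=0$, whence $c(uw-wu)a=cNa=0$. This single cancellation is the whole point: without it $uw$ and $wu$ need not agree after multiplication by $c$ and $a$, which is exactly why Cline's formula is expected to fail for one-sided Drazin inverses in general.

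For part (ii) the same element $z=cw^{2}a$ works, now with $w$ a right Drazin inverse of $ac$ (so $uwu=u^{2}w$, $uw^{2}=w$, $u^{k+1}w=u^{k}$), and one checks dually that $z$ is a right Drazin inverse of $ca$ with $(ca)^{k+2}z=(ca)^{k+1}$. The only change is in the intertwining step: the relation $uwu=u^{2}w$ now gives $uN=0$, i.e. $ac\,N=0$, so $a(cN)=0$, and since $a$ is left invertible one cancels $a$ on the left to obtain $cN=0$, hence $cNa=0$ again. I expect the verification of the intertwining relation, and in particular correctly identifying which one-sided cancellation discharges $c(uw-wu)a=0$, to be the only real obstacle; the remaining identities are routine regroupings of the defining equations for $w$.
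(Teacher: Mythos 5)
Your proposal is correct and takes essentially the same route as the paper: both use the Cline-type candidate $cx^{2}a$ (your $cw^{2}a$), verify the same three defining relations by the same regroupings, and invoke the one-sided invertibility of $c$ (resp.\ $a$) exactly once to perform a cancellation. The only cosmetic difference is that the paper cancels $c$ from the identity $(ca)y(ca)c=y(ca)^{2}c$, whereas you isolate the commutator $N=uw-wu$ and cancel $c$ from $(Na)c=0$; this is the same cancellation, packaged slightly differently.
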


\begin{proof} Suppose that $ac$ is left Drazin invertible and $x$ is a left Drazin inverse of $ac$ with left Drazin index $k$. Set $y=cx^2a$. To prove that $y$ is a left Drazin inverse of $ca$ with index $k+1$, it suffices to demonstrate that the following three conditions hold: $(ca)y(ca) = y(ca)^2$, $y^2(ca)=y$ and $y(ca)^{k+2}=(ca)^{k+1}$.

First, since
$$(ca)y(ca)c=cacx^2(ac)^2=cacxacxac=cx^2(ac)^3=y(ca)^2c$$
and $c$ is right invertible, we have $(ca)y(ca)=y(ca)^2$.
Next,
$$y^2(ca)=cx^2acx^2a(ca)=cx^2a=y.$$
Finally, we have
$$y(ca)^{k+2}=cx^2a(ca)^{k+2}=cx(ac)^{k+1}a=c(ac)^{k}a=(ca)^{k+1}.$$
Consequently, $ca$ is Drazin invertible with index $k+1$.
\end{proof}

\section{Common one-sided Drazin invertibility of two elements under certain intertwining conditions}
\label{sec4}

Let $a,~b$ be two elements in $\mathcal{A}$ satisfying intertwining conditions
\begin{gather}\label{eq4.1}
ab^n=b^{n+1} \makebox{~~and~~} ba^n=a^{n+1} \tag{4.1}
\end{gather}
for some positive integer $n$. In this section, we primarily focus on the common one-sided (generalized) Drazin invertibility of $1-a$ and $1-b$ under intertwining conditions (\ref{eq4.1}). The formulas for (generalized) Drazin inverses of $1-a$ and $1-b$ are also provided.

\begin{theorem} \label{4.0} Let $a, b\in \mathcal{A}$ satisfy \emph{(\ref{eq4.1})}. Then $1-a$ is left (resp. right) regular if and only if $1-b$ is left (resp. right) regular.
\end{theorem}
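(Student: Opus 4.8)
The plan is to reduce everything to an explicit transfer of a one-sided regular inverse, using the single structural consequence of \emph{(\ref{eq4.1})} that $a$ and $b$ act identically on sufficiently high powers. Since \emph{(\ref{eq4.1})} is symmetric under interchanging $a$ and $b$, for the left-regular equivalence it suffices to prove that left regularity of $1-a$ forces left regularity of $1-b$. First I would record the amplified relations: right-multiplying $ab^n=b^{n+1}$ by powers of $b$ gives $ab^m=b^{m+1}$, i.e. $(1-a)b^m=(1-b)b^m$, for every $m\ge n$; an easy induction on $k$ (expanding $(1-b)^k b^n$ as a combination of the $b^{n+j}$) then yields the key identity $(1-a)^k b^n=(1-b)^k b^n$ for all $k\ge 0$. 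Writing $u=1-a$ and $v=1-b$, these say $u^k b^n=v^k b^n$.

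Next, assume $xu^2=u$. Multiplying on the right by $b^n$ and using $u^2b^n=v^2b^n$ together with $ub^n=vb^n$ gives $xv^2b^n=vb^n$, so the defect $w:=xv^2-v$ satisfies $wb^n=0$, hence $wb^m=0$ for all $m\ge n$. The content is that $xv^2=v$ already holds modulo the left annihilator of $b^n$, and the remaining task is to absorb the defect. For this I would use the telescoping factorization $vs=sv=1-b^{n+1}$ with $s:=\sum_{i=0}^{n}b^i$. Since $wb^{n+1}=0$ we get $w(1-vs)=0$, i.e. $w=wvs$, and substituting this identity into itself yields $w=w(vs)^2=wv^2s^2$ (using that $v$ and $s$ commute, being polynomials in $b$). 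Hence $w=(ws^2)v^2$, and setting $y:=x-ws^2=x-(xv^2-v)s^2$ one computes $yv^2=xv^2-ws^2v^2=xv^2-wv^2s^2=xv^2-w=v$. Thus $y$ is a left regular inverse of $1-b$.

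The right-regular case I would handle by the mirror construction, with the roles of the two relations in \emph{(\ref{eq4.1})} exchanged. The main obstacle, and essentially the only delicate point, is exactly the step just described: the intertwining identity is not a global equality but holds only after multiplication by $b^n$, so one must convert an equality modulo the annihilator of $b^n$ into an exact regular inverse, which is precisely what the factorization $vs=1-b^{n+1}$ accomplishes. I would also keep in mind the handedness asymmetry of \emph{(\ref{eq4.1})}: the relations $(a-b)a^n=(a-b)b^n=0$ are left-annihilator relations, which transfer left regularity cleanly because in $xu^2$ the factor $u^2$ sits adjacent to the appended $b^n$; consequently the right-regular direction is not a verbatim copy but requires installing the correctly-sided analogue of both the intertwining identity and the factorization before the same absorption argument goes through.
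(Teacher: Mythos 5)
Your left-regular argument is complete, correct, and genuinely different from the paper's. The paper simply exhibits the candidate $y=1+\sum_{i=1}^{2n-1}b^i+a^nxb^n$ and checks $y(1-b)^2=1-b$ by direct computation, whereas you \emph{derive} a left inverse, $y=x-\bigl(x(1-b)^2-(1-b)\bigr)s^2$ with $s=\sum_{i=0}^{n}b^i$, via the annihilator-absorption mechanism. Your route even buys something the paper's does not make visible: it uses only the single relation $ab^n=b^{n+1}$, so left regularity of $1-a$ transfers to $1-b$ under half of hypothesis (\ref{eq4.1}); the induction $(1-a)^kb^n=(1-b)^kb^n$, the factorization $(1-b)s=1-b^{n+1}$, and the absorption $w=w(1-b)^2s^2$ are all verified correctly.

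The right-regular case, however, is a genuine gap as you describe it. Your absorption needs a defect $w=(1-b)^2x'-(1-b)$ killed by a power of $b$ on the correct side, namely $b^{N}w=0$, so that $w=(1-b^{n+1})w=(1-b)sw$ can be iterated to give $w\in(1-b)^2\mathcal{A}$. But in both relations of (\ref{eq4.1}) the difference stands on the left and the power on the right, $(a-b)b^n=0$ and $(b-a)a^n=0$; every identity they generate has the form $(1-a)^kb^n=(1-b)^kb^n$ or $(1-a)^ka^n=(1-b)^ka^n$, and no identity $b^N(1-a)^k=b^N(1-b)^k$ can be derived. Indeed, two distinct idempotents with $ab=b$, $ba=a$ (such exist in $M_2(\mathbb{C})$) satisfy (\ref{eq4.1}) with $n=1$, yet $b^N(1-a)=b-a\neq 0=b^N(1-b)$ for every $N$. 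Consequently, starting from $(1-a)^2x=1-a$, no one-sided multiplication produces a defect that is left-annihilated by a power of $b$: for instance, left-multiplying by $a^n$ gives $(1-b)^2(a^nx)=(1-b)a^n$, whose defect $-(1-b)(1-a^n)$ is not killed by any $b^N$. So ``the same absorption argument'' does not go through, and the mirror you appeal to does not exist. The statement is of course still true, but the working argument is different and genuinely needs \emph{both} relations: multiply $(1-a)^2x=1-a$ by $a^n$ on the left \emph{and} $b^n$ on the right, obtaining
\begin{equation*}
(1-b)^2a^nxb^n=a^n(1-a)^2xb^n=a^n(1-a)b^n=(1-a)b^{2n}=(1-b)b^{2n},
\end{equation*}
where the first equality uses $(1-b)^2a^n=(1-a)^2a^n$ (from $ba^n=a^{n+1}$) and the last two use $a^kb^m=b^{m+k}$ and $(1-a)b^{2n}=(1-b)b^{2n}$ (from $ab^n=b^{n+1}$). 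Now the defect $(1-b)b^{2n}-(1-b)=-(1-b)^2\sum_{i=0}^{2n-1}b^i$ lies in $(1-b)^2\mathcal{A}$ outright, no annihilator lemma is needed, and $y=a^nxb^n+\sum_{i=0}^{2n-1}b^i$, which is exactly the paper's formula, satisfies $(1-b)^2y=1-b$.
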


\begin{proof} Suppose that $1-a$ is left regular. Then there exists an element $x$ such that $x(1-a)^2=1-a$. Set
$$y=1+\sum\limits_{i=1}^{2n-1}b^i+a^nxb^n.$$
From the definition of left regular, it is routine to examine that $y(1-b)^2=1-b$. This implies that $1-b$ is also left regular. Conversely, let $1-b$ is left regular with correspond inverse $y$. Then it is routine to examine that
$$(1+\sum\limits_{i=1}^{2n-1}a^i+b^nya^n)(1-a)^2=1-a.$$
Hence $1-a$ is also left regular. The proof of right regularity is similar to that of left regularity.
\end{proof}

\begin{theorem} \label{4.1} Let $a, b\in \mathcal{A}$ satisfy \emph{(\ref{eq4.1})}. Then $1-a$ is left (resp. right) strongly $\pi$-regular if and only if $1-b$ is left (resp. right) strongly $\pi$-regular.
\end{theorem}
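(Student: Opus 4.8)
The plan is to imitate the proof of Theorem \ref{5}(ii), reusing the very transfer element already built in Theorem \ref{4.0}, but verifying the two defining conditions of left strong $\pi$-regularity rather than plain regularity. Write $\alpha=1-a$ and $\beta=1-b$. It suffices to treat the left case: the relations (\ref{eq4.1}) are symmetric under $a\leftrightarrow b$, so the converse follows by interchanging $a$ and $b$, and the right statement is the mirror-image computation. The preliminary step is to extract the \emph{power-pushing} consequences of (\ref{eq4.1}): multiplying $ab^{n}=b^{n+1}$ on the right by $b^{m-n}$ gives $ab^{m}=b^{m+1}$ for every $m\geq n$, and an induction on $j$ then yields $a^{j}b^{m}=b^{m+j}$ (and dually $b^{j}a^{m}=a^{m+j}$) for all $m\geq n$ and $j\geq 0$; in particular $a^{n}b^{n}=b^{2n}$.

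From these I would record the three mixed identities that drive the entire argument:
$$b^{n}\beta^{k}=\alpha^{k}b^{n},\qquad \beta^{k}a^{n}=a^{n}\alpha^{k},\qquad a^{n}\alpha^{k}b^{n}=b^{2n}\beta^{k},$$
each obtained by a one-line induction on $k$ (the base case is $b^{n}(1-b)=b^{n}-b^{n+1}=(1-a)b^{n}$, and symmetrically on the other side; the third identity combines the first two with $a^{n}b^{n}=b^{2n}$).

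Now suppose $\alpha$ is left strongly $\pi$-regular, so there are $x$ and $p$ with $\alpha x\alpha=x\alpha^{2}$ and $x\alpha^{p+1}=\alpha^{p}$. I would take the same element as in Theorem \ref{4.0},
$$y=1+\sum_{i=1}^{2n-1}b^{i}+a^{n}xb^{n},$$
and check the two conditions for $\beta$. For the power condition, the telescoping identity $(1+\sum_{i=1}^{2n-1}b^{i})(1-b)=1-b^{2n}$ gives $(1+\sum b^{i})\beta^{p+1}=\beta^{p}-b^{2n}\beta^{p}$, while the mixed identities turn $a^{n}xb^{n}\beta^{p+1}$ into $a^{n}x\alpha^{p+1}b^{n}=a^{n}\alpha^{p}b^{n}=b^{2n}\beta^{p}$; the two $b^{2n}\beta^{p}$ terms cancel, leaving $y\beta^{p+1}=\beta^{p}$, so the index is even preserved. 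For the intertwining condition I would compute $\beta y\beta$ and $y\beta^{2}$ separately and show both reduce, via the mixed identities together with $\alpha x\alpha=x\alpha^{2}$, to $\beta-b^{2n}\beta+a^{n}x\alpha^{2}b^{n}$, whence $\beta y\beta=y\beta^{2}$.

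Once the mixed identities are in place the verification is short, so the main obstacle is bookkeeping rather than a conceptual difficulty: one must ensure that every power pushed across a sandwich has exponent at least $n$ (this is exactly why the powers $a^{n}$ and $b^{n}$, not lower ones, are used), and that the single index $p$ serves both the intertwining and the power requirements simultaneously. The mildly non-obvious point is recognizing that the element from Theorem \ref{4.0}, originally produced for a regular inverse, works unchanged here. The converse is obtained verbatim after swapping $a\leftrightarrow b$, and the right-handed statement follows from the mirror computation using the sandwich $b^{n}xa^{n}$ and the dual conditions $\alpha x\alpha=\alpha^{2}x$, $\alpha^{p+1}x=\alpha^{p}$.
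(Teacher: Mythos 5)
Your left-handed argument is correct and is essentially the paper's own proof: the same transfer element $y=1+\sum_{i=1}^{2n-1}b^{i}+a^{n}xb^{n}$, the same two verifications (intertwining and power condition) driven by the identities $b^{n}\beta=\alpha b^{n}$, $\beta a^{n}=a^{n}\alpha$, $a^{n}b^{n}=b^{2n}$, the same preservation of the index $p$, and the converse by the $a\leftrightarrow b$ symmetry of (\ref{eq4.1}).

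However, your final claim about the right-handed statement contains a genuine error: the sandwich $b^{n}xa^{n}$ does not work. Suppose $x$ satisfies the dual conditions $\alpha x\alpha=\alpha^{2}x$ and $\alpha^{p+1}x=\alpha^{p}$, and set $y=1+\sum_{i=1}^{2n-1}b^{i}+b^{n}xa^{n}$. For right strong $\pi$-regularity of $\beta$ you must reduce $\beta^{p+1}b^{n}xa^{n}$, but the identities available from (\ref{eq4.1}) only give $\beta^{p+1}b^{n}=b^{n}\beta^{p+1}=\alpha^{p+1}b^{n}$: either the powers are of $\beta$ (not $\alpha$, so the hypothesis $\alpha^{p+1}x=\alpha^{p}$ does not apply), or they are powers of $\alpha$ stranded to the \emph{left} of $b^{n}$, never adjacent to $x$. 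The identity you would need to bridge the gap, $\alpha b^{n}=b^{n}\alpha$, is not a consequence of (\ref{eq4.1}); what (\ref{eq4.1}) actually yields is $\alpha b^{n}=b^{n}\beta$. The intertwining check fails even more badly, since it produces the factor $xa^{n}\beta$, and $a^{n}\beta=a^{n}-a^{n}b$ is completely uncontrolled by (\ref{eq4.1}).

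The correct treatment of the right-handed case keeps the \emph{same} element $y=1+\sum_{i=1}^{2n-1}b^{i}+a^{n}xb^{n}$ as in the left case (this is also what the formulas in Theorem \ref{4.2} of the paper indicate, where only $p$ changes between the two cases): one pushes the $\beta$-powers in from the left using $\beta a^{n}=a^{n}\alpha$, so that $\beta^{p+1}a^{n}xb^{n}=a^{n}\alpha^{p+1}xb^{n}=a^{n}\alpha^{p}b^{n}=b^{2n}\beta^{p}$, which cancels against $\beta^{p+1}(1+\sum_{i=1}^{2n-1}b^{i})=\beta^{p}-b^{2n}\beta^{p}$, giving $\beta^{p+1}y=\beta^{p}$; similarly $\beta y\beta=(1-b^{2n})\beta+a^{n}\alpha x\alpha b^{n}=(1-b^{2n})\beta+a^{n}\alpha^{2}xb^{n}=\beta^{2}y$. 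The conceptual reason your \textquotedblleft mirror\textquotedblright~heuristic fails is that the hypotheses (\ref{eq4.1}) are not invariant under reversing products: passing to the opposite algebra turns $ab^{n}=b^{n+1}$ into $b^{n}a=b^{n+1}$, a genuinely different condition. So the right-handed case is not a formal mirror image of the left one; it is a parallel computation that happens to use the same transfer element with $\beta$-powers applied on the other side.
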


\begin{proof} We only prove the case of left strongly $\pi$-regular. Suppose that $1-a$ is left strongly $\pi$-regular, namely there exists an element $x\in \mathcal{A}$ and an integer $p$ such that $(1-a)x(1-a)=x(1-a)^2$ and $x(1-a)^{p+1}=(1-a)^p$. Set
$$y=1+\sum\limits_{i=1}^{2n-1}b^i+a^nxb^n.$$
First, we have
$$
\begin{aligned} (1-b)y(1-b)&=(1-b)(1+\sum\limits_{i=1}^{2n-1}b^i+a^nxb^n)(1-b)\\
&=(1-b)^2+b-b^2-b^{2n}+b^{2n+1}+a^n(1-a)x(1-a)b^n\\
&=1-b-b^{2n}+b^{2n+1}+a^nx(1-a)^2b^n\\
\end{aligned}
$$
and
$$\begin{aligned} y(1-b)^2&=(1+\sum\limits_{i=1}^{2n-1}b^i+a^nxb^n)(1-b)^2=1-b-b^{2n}+b^{2n+1}+a^nx(1-a)^2b^n.
\end{aligned}
$$
Thus $(1-b)y(1-b)=y(1-b)^2$. Next
$$
\begin{aligned} y(1-b)^{p+1}&=(1+\sum\limits_{i=1}^{2n-1}b^i+a^nxb^n)(1-b)^{p+1}\\
&=(1+\sum\limits_{i=1}^{2n-1}b^i)(1-b)^{p+1}+a^nxb^n(1-b)^{p+1}\\
&=(1+\sum\limits_{i=1}^{2n-1}b^i)(1-b)^{p+1}+a^nx(1-a)^{p+1}b^n\\
&=(1+\sum\limits_{i=1}^{2n-1}b^i)(1-b)^{p+1}+a^n(1-a)^{p}b^n\\
&=(1+\sum\limits_{i=1}^{2n-1}b^i)(1-b)^{p+1}+a^nb^n(1-b)^{p}\\
&=[(1+\sum\limits_{i=1}^{2n-1}b^i)(1-b)+a^nb^n](1-b)^p\\
&=(1-b)^p.
\end{aligned}
$$
Consequently, $1-b$ is left strongly $\pi$-regular. Conversely, one can get the conclusion by similar proof and computation as above.
\end{proof}

Applying similar computational techniques as those presented in Section 3, we obtain the formulas for one-sided (generalized) Drazin inverses of $1-a$ and $1-b$ under the conditions (\ref{eq4.1}).

\begin{theorem} \label{4.2} Let $a, b\in \mathcal{A}$ satisfy \emph{(\ref{eq4.1})}. Then $1-a$ is left (resp. right) Drazin invertible with index $k$ if and only if $1-b$ is left (resp. right) Drazin invertible with index $k$. More precisely, if $1-a$ has a left (resp. right) Drazin inverse $x$ with index $k$, then
$$y=(1-a^nrpb^n)(1+\sum\limits_{i=1}^{2n-1}b^i)+a^nxb^n$$
is a left (resp. right) Drazin inverse of $1-b$. Conversely, if $1-b$ has a left (resp. right) Drazin inverse $y$ with index $k$, then
$$x=(1-b^np'r'a^n)(1+\sum\limits_{i=1}^{2n-1}a^i)+b^nxa^n$$
is a left (resp. right) Drazin inverse of $1-a$. In both two case, $r=\sum\limits_{i=0}^{k-1}(1-a^{2n})^i$, $r'=\sum\limits_{i=0}^{k-1}(1-b^{2n})^i$, $p=1-x(1-a)$ and $p'=1-y(1-b)$ (resp. $p=1-(1-a)x$ and $p'=1-(1-b)y$).
\end{theorem}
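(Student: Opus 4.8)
The plan is to mirror the proof of Theorem \ref{3.5}, replacing the role of Theorem \ref{5} by Theorem \ref{4.1} and again exploiting the one-sided Azumaya realization of Remark \ref{rk1}; I treat only the left case, the right one being dual. Suppose $x$ is a left Drazin inverse of $1-a$ with index $k$. Then $x$ is in particular a left inner inverse, and the proof of Theorem \ref{4.1} shows that $y_0 := 1 + \sum_{i=1}^{2n-1}b^i + a^n x b^n$ is a left inner inverse of $1-b$, witnessing that $1-b$ is left strongly $\pi$-regular with index $k$. By Remark \ref{rk1}, $y := y_0^{k+1}(1-b)^k$ is then a left Drazin inverse of $1-b$ with index $k$, so the entire content of the theorem is to reduce this expression to the stated closed form; the index preservation is read off in both directions exactly as in Theorem \ref{3.5}.

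Before simplifying I would record the algebraic facts forced by $x$ being a left Drazin inverse of $\alpha := 1-a$ and by (\ref{eq4.1}). From $\alpha x\alpha = x\alpha^2$ and $x^2\alpha = x$ one gets that $x\alpha$ is idempotent and commutes with $\alpha$; hence $p := 1 - x(1-a)$ is idempotent, commutes with $a$ and with every polynomial in $a$, and satisfies $xp = 0$. From $ab^n = b^{n+1}$ and $ba^n = a^{n+1}$ one obtains $ab^m = b^{m+1}$ and $ba^m = a^{m+1}$ for all $m\ge n$, and in particular $a^n b^n = b^{2n}$ and $b^n a^n = a^{2n}$. Using $b^n(1-b) = (1-a)b^n$ together with $x(1-a) = 1-p$ and $a^n b^n = b^{2n}$, a short computation collapses the cross terms and yields the crucial identity $y_0(1-b) = 1 - a^n p b^n =: 1-v$.

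Next I would carry out the two structural simplifications. First, the inner-inverse relation $(1-b)y_0(1-b) = y_0(1-b)^2$ gives, by the same induction as in Theorem \ref{3.5}, the telescoping identity $(1-b)^m y_0(1-b) = y_0(1-b)^{m+1}$, whence $y_0^m(1-b)^m = \big(y_0(1-b)\big)^m = (1-v)^m$; with $m=k$ this turns $y$ into $y = y_0(1-v)^k$. Second, I would compute the powers of $v$: using $b^n a^n = a^{2n}$, $p^2 = p$ and $pa = ap$ one checks by induction that $v^m = a^{(2m-1)n} p b^n$, so the binomial expansion of $(1-v)^k$ factors as $1 - a^n\big(\sum_{m=1}^{k}\binom{k}{m}(-1)^{m+1}(a^{2n})^{m-1}\big)p b^n$. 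The polynomial identity $\sum_{m=1}^{k}\binom{k}{m}(-1)^{m+1}t^{m-1} = \sum_{j=0}^{k-1}(1-t)^j$ (both sides equal $(1-(1-t)^k)/t$) identifies the bracket with $r = \sum_{j=0}^{k-1}(1-a^{2n})^j$, giving $(1-v)^k = 1 - a^n r p b^n$.

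Finally I would assemble the pieces. Writing $y = y_0(1 - a^n r p b^n)$ and expanding $y_0 = (1+\sum_{i=1}^{2n-1}b^i) + a^n x b^n$, the term $a^n x b^n(1 - a^n r p b^n)$ reduces to $a^n x b^n$, since $x b^n a^n r p = x a^{2n} r p = x p\,(a^{2n} r) = 0$ (using $b^n a^n = a^{2n}$, $xp=0$ and that $p$ commutes with $a^{2n}r$). To match the stated order of factors it remains to commute $1+\sum_{i=1}^{2n-1}b^i$ past $a^n r p b^n$; since $ba^n = a^{n+1}$, $ab^n = b^{n+1}$ and $pa = ap$ force $bv = vb$, and $a^n r p b^n = \sum_{j=0}^{k-1} c_j v^{j+1}$ is a polynomial in $v$, this commutation holds and produces $y = (1 - a^n r p b^n)(1+\sum_{i=1}^{2n-1}b^i) + a^n x b^n$, as claimed. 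The converse implication and the right-handed statements follow by the symmetric substitution $a \leftrightarrow b$, $x \leftrightarrow y$. The main obstacle is the second structural step: establishing the quasi-commutation $y_0^k(1-b)^k = (y_0(1-b))^k$ and then matching the binomial coefficients against the explicit $r$; once these are in place, the remaining reductions are the same bookkeeping as in Theorem \ref{3.5}.
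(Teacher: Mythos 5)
Your proposal is correct and takes essentially the same route as the paper's own proof: transferring left strong $\pi$-regularity via Theorem \ref{4.1}, invoking the Azumaya realization of Remark \ref{rk1} to get $y_0^{k+1}(1-b)^k$, and then simplifying through the identity $y_0(1-b)=1-a^npb^n$, the relations $xp=0$ and $pa=ap$, and the geometric-sum/binomial identity that produces $r$. The only differences are expository: you spell out the power computation $v^m=a^{(2m-1)n}pb^n$, the polynomial identity identifying $r$, and the commutation $bv=vb$, all of which the paper uses implicitly.
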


\begin{proof} We only present the proof for left Drazin invertibility, as the proof for right Drazin invertibility is dual. Suppose that $1-a$ has a left Drazin inverse $x$. Then $x$ is also a left inner inverse of $1-a$. By Theorem \ref{4.1}, $1-b$ is left (resp. right) strongly $\pi$-regular with a left inner inverse $1+\sum\limits_{i=1}^{2n-1}b^i+a^nxb^n$. From the Remark \ref{rk1}, $y=(1+\sum\limits_{i=1}^{2n-1}b^i+a^nxb^n)^{k+1}(1-b)^k$ is a left Drazin inverse of $1-b$. Since
$$(1+\sum\limits_{i=1}^{2n-1}b^i+a^nxb^n)(1-b)=1-b^{2n}+a^nx(1-a)b^n$$
and $1-b$ commutes with $a^nx(1-a)b^n$, we have
$$y=(1+\sum\limits_{i=1}^{2n-1}b^i+a^nxb^n)[1-b^{2n}+a^nx(1-a)b^n]^k.$$
It should be noted that $a$ commutes with $x(1-a)$. Then
$$
\begin{aligned}&a^nxb^n[1-b^{2n}+a^nx(1-a)b^n]=a^nxb^n-a^nxb^{3n}+a^nxb^na^nx(1-a)b^n\\
&=a^nxb^n-a^nxa^{2n}b^n+a^nxa^{2n}x(1-a)b^n=a^nxb^n-a^nxa^{2n}[1-x(1-a)]b^n\\
&=a^nxb^n-a^{3n}x[1-x(1-a)]b^n=a^nxb^n.
\end{aligned}
$$
Hence
$$
\begin{aligned}y&=(1+\sum\limits_{i=1}^{2n-1}b^i)[1-b^{2n}+a^nx(1-a)b^n]^k+a^nxb^n[1-b^{2n}+a^nx(1-a)b^n]^k\\
&=[1-b^{2n}+a^nx(1-a)b^n]^k(1+\sum\limits_{i=1}^{2n-1}b^i)+a^nxb^n\\
&=(1-a^npb^n)^k(1+\sum\limits_{i=1}^{2n-1}b^i)+a^nxb^n\\
&=(1-a^nprb^n)(1+\sum\limits_{i=1}^{2n-1}b^i)+a^nxb^n,\\
\end{aligned}
$$
where $r=\sum\limits_{i=0}^{k-1}(1-a^{2n})^i$. This completes the proof. Conversely, one can get the conclusion by similar proof and computation as above.
\end{proof}

By taking the one-sided Drazin index $k=1$, we obtain the Jacobson\textquoteright s lemma for one-sided group inverse.

\begin{theorem} \label{4.3} Let $a, b\in \mathcal{A}$ satisfy \emph{(\ref{eq4.1})}. Then $1-a$ is left (resp. right) group inverse if and only if $1-b$ is left (resp. right) group inverse. More precisely, if $1-a$ has a left (resp. right) group inverse, then
$$y=(1-a^npb^n)(1+\sum\limits_{i=1}^{2n-1}b^i)+a^nxb^n$$
is a left (resp. right) group inverse of $1-b$. Conversely, if $1-b$ has a left (resp. right) group inverse $y$, then
$$x=(1-b^np'a^n)(1+\sum\limits_{i=1}^{2n-1}a^i)+b^nxa^n$$
is a left (resp. right) group inverse of $1-a$. In both two cases, $p=x(1-a)$ and $p'=1-y(1-b)$ (resp. $p=1-(1-a)x$ and $p'=1-(1-b)y$).
\end{theorem}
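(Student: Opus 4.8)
The plan is to deduce this statement as the special case $k=1$ of Theorem \ref{4.2}. Recall that in the definitions section, left (resp. right) group invertibility in $\mathcal{A}$ was introduced precisely as left (resp. right) Drazin invertibility of index $j=1$. Hence the equivalence ``$1-a$ is left (resp. right) group invertible $\iff$ $1-b$ is left (resp. right) group invertible'' is nothing but the index-$1$ instance of the index-preserving equivalence already established in Theorem \ref{4.2}, and no new argument is needed for it. The only remaining task is to check that the explicit inverse formulas specialize correctly.

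The key observation is purely computational: the auxiliary factors in Theorem \ref{4.2} are the truncated geometric sums $r=\sum_{i=0}^{k-1}(1-a^{2n})^i$ and $r'=\sum_{i=0}^{k-1}(1-b^{2n})^i$. Setting $k=1$ collapses each sum to its single $i=0$ term, giving $r=(1-a^{2n})^{0}=1$ and $r'=(1-b^{2n})^{0}=1$. Substituting $r=r'=1$ into the formula $y=(1-a^nrpb^n)(1+\sum_{i=1}^{2n-1}b^i)+a^nxb^n$ of Theorem \ref{4.2} yields exactly $y=(1-a^npb^n)(1+\sum_{i=1}^{2n-1}b^i)+a^nxb^n$, and likewise the converse formula reduces to $x=(1-b^np'a^n)(1+\sum_{i=1}^{2n-1}a^i)+b^nya^n$, which are the expressions asserted here. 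The meanings of $p=1-x(1-a)$ and $p'=1-y(1-b)$ (resp. $p=1-(1-a)x$ and $p'=1-(1-b)y$) carry over verbatim from Theorem \ref{4.2}.

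I expect essentially no obstacle, since this is a clean specialization rather than an independent proof; the entire content is already contained in Theorem \ref{4.2}. The only point requiring a moment of care is the bookkeeping that matches the definition of one-sided group invertibility with index-one one-sided Drazin invertibility, so that one may legitimately invoke both the equivalence and the index claim of Theorem \ref{4.2}. Once that identification is made explicit, the theorem follows immediately by reading off the $k=1$ case.
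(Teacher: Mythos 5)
Your proposal is correct and coincides with the paper's own treatment: the paper offers no separate proof of this theorem, obtaining it exactly as the $k=1$ specialization of Theorem \ref{4.2}, in which the truncated sums $r=\sum_{i=0}^{k-1}(1-a^{2n})^i$ and $r'=\sum_{i=0}^{k-1}(1-b^{2n})^i$ collapse to $1$. Your reading moreover quietly corrects two slips in the statement as printed ($p=1-x(1-a)$ rather than $p=x(1-a)$, and $b^nya^n$ rather than $b^nxa^n$), which is consistent with Theorem \ref{4.2}.
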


\begin{theorem} \label{4.5} Let $a, b\in \mathcal{A}$ satisfy $ab^n=b^{n+1}$ and $ba^n=a^{n+1}$. Then $1-a$ is left (resp. right) generalized Drazin invertible if and only if $1-b$ is left (resp. right) generalized Drazin invertible. More precisely, if $1-a$ has a left (resp. right) generalized Drazin inverse $x$, then
$$y=(1-a^n[1-p(1-a^{2n})]^{-1}pb^n)(1+\sum\limits_{i=1}^{2n-1}b^i)+a^nxb^n$$
is a left (resp. right) generalized Drazin inverse of $1-b$. Conversely, if $1-b$ has a left $($resp. right$)$ generalized Drazin inverse $y$, then
$$x=(1-b^np'[1-p'(1-b^{2n})]^{-1}a^n)(1+\sum\limits_{i=1}^{2n-1}a^i)+b^nxa^n$$
is a left (resp. right) generalized Drazin inverse of $1-a$. In both two cases, $p=1-x(1-a)$ and $p'=1-y(1-b)$ (resp. $p=1-(1-a)x$ and $p'=1-(1-b)y$).
\end{theorem}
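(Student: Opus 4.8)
The plan is to prove the \emph{left} generalized Drazin direction by directly verifying that the proposed $y$ satisfies the three defining relations
(i)~$(1-b)y(1-b)=y(1-b)^2$, (ii)~$y^2(1-b)=y$, and (iii)~$(1-b)-(1-b)y(1-b)\in\mathcal{A}^{qnil}$; the right-handed statement and the converse then follow by the symmetric substitution $a\leftrightarrow b$, $x\leftrightarrow y$. Unlike Theorem \ref{4.2}, where one bootstraps from strong $\pi$-regularity via Remark \ref{rk1} by raising an inner inverse to a power, here the ``index'' is not finite, so I would instead imitate the direct verification carried out for Theorem \ref{3.6}, under the dictionary $bac\leftrightarrow a^n$, $d\leftrightarrow b^n$, $\alpha(1+ac)\leftrightarrow 1-a^{2n}$.

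First I would record the algebraic facts used throughout. From $(1-a)x(1-a)=x(1-a)^2$ one gets that $a$ commutes with $u:=x(1-a)$, hence with $p=1-u$ and with every power series in $a$ and $p$, in particular with $w:=[1-p(1-a^{2n})]^{-1}$. Combining this first relation with $x^2(1-a)=x$ shows $u^2=x[(1-a)x(1-a)]=x^2(1-a)^2=x(1-a)=u$, so $u$ is idempotent; consequently $p^2=p$, $xp=0$ and $p\,x(1-a)=p(1-p)=0$. Iterating (\ref{eq4.1}) yields the collapsing relations $a^kb^n=b^{n+k}$ and $b^ka^n=a^{n+k}$ for all $k\ge 0$; in particular $a^nb^n=b^{2n}$, $b^na^n=a^{2n}$, $a^{2n}b^n=b^{3n}$, together with the two intertwining identities $(1-b)a^n=a^n(1-a)$ and $b^n(1-b)=(1-a)b^n$, which drive the entire computation.

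The step I expect to be the main obstacle is twofold. First, one must justify that $1-p(1-a^{2n})$ is invertible so that $w$ is well defined: writing $1-a^{2n}=(1-a)s$ with $s=\sum_{i=0}^{2n-1}a^i$, I observe that $p(1-a)=(1-a)-(1-a)x(1-a)$ is precisely the quasi-nilpotent element supplied by the left generalized Drazin hypothesis and that $s$ commutes with it, whence $r(p(1-a^{2n}))\le r(p(1-a))\,r(s)=0$ and $1-p(1-a^{2n})$ is invertible. Second, with $w$ in hand, the pivotal intermediate identity is $y(1-b)=1-a^nwpb^n$: after using $q(1-b)=1-b^{2n}$ (where $q=1+\sum_{i=1}^{2n-1}b^i$), $b^n(1-b)=(1-a)b^n$, the collapsing relations and the resolvent identity $wp(1-a^{2n})=w-1$, the residual discrepancy reduces to a multiple of $p\,x(1-a)=0$ and so vanishes. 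Getting this cancellation right, rather than being left with stray $b^{2n}$ terms, is the delicate point peculiar to the generalized (non-finite-index) setting, since it relies essentially on the idempotency of $p$ rather than on a power computation.

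Once $y(1-b)=1-a^nwpb^n$ is secured, the three conditions follow cleanly. For (i), applying $(1-b)a^n=a^n(1-a)$ and $(1-a)b^n=b^n(1-b)$ shows that both $(1-b)y(1-b)$ and $y(1-b)^2$ equal $(1-b)-a^nwpb^n(1-b)$. For (ii), I would expand $y\cdot a^nwpb^n$; the $a^nxb^n$ summand of $y$ contributes $a^nxa^{2n}wpb^n=a^n(xp)a^{2n}wb^n=0$ after commuting $p$ leftwards through the powers of $a$ and through $w$, while the remaining terms collapse via $qa^n=a^ns$ and $wp(1-a^{2n})=w-1$ to give $y^2(1-b)=y$. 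For (iii), one has $(1-b)-(1-b)y(1-b)=a^n(1-a)pwb^n$; applying $r(XY)=r(YX)$ to move $b^n$ to the front together with $b^na^n=a^{2n}$ reduces the spectral radius to $r\big(a^{2n}w\cdot(1-a)p\big)\le r(a^{2n}w)\,r\big((1-a)p\big)=0$, so the element is quasi-nilpotent, completing the verification.
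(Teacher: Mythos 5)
Your proposal is correct and follows essentially the same route as the paper's own proof: a direct verification of the three defining conditions, built on the same pivotal identity $y(1-b)=1-a^np[1-p(1-a^{2n})]^{-1}b^n$, the relation $xp=0$ for condition (ii), and the commuting spectral-radius estimate $r\big([1-p(1-a^{2n})]^{-1}a^{2n}\big)\,r\big((1-a)p\big)=0$ for condition (iii). The only difference is presentational: you make explicit the idempotency of $p$ and the quasi-nilpotence argument behind the invertibility of $1-p(1-a^{2n})$, both of which the paper uses implicitly.
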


\begin{proof} We only present the proof for left generalized Drazin invertibility, as the proof for right generalized Drazin invertibility is dual. In order to prove that $y$ is a left generalized Drazin inverse of $1-b$, it suffices to show that the following three conditions hold : (i) $(1-b)y(1-b)=y(1-b)^2$; (ii) $y^2(1-b)=y$; (iii) $(1-b)y(1-b)-(1-b)$ is quasi-nilpotent.

(i) Since $p$ commutes with $a$,
$$1-p(1-a^{2n})=1-p(1+\sum\limits_{i=1}^{2n-1}a^i)(1-a)$$
is invertible. Then
$$\begin{aligned}y(1-b)&=(1-a^n[1-p(1-a^{2n})]^{-1}pb^n)(1-b^{2n})+a^nxb^n(1-b)\\
&=1-b^{2n}-a^n[1-p(1-a^{2n})]^{-1}p(1-a^{2n})b^n+a^nx(1-a)b^n\\
&=1-a^npb^{n}-a^np[1-p(1-a^{2n})]^{-1}(1-a^{2n})b^n\\
&=1-a^np\{1+[1-p(1-a^{2n})]^{-1}p(1-a^{2n})\}b^n\\
&=1-a^np[1-p(1-a^{2n})]^{-1}b^n,
\end{aligned}
$$
and so
$$\begin{aligned}y(1-b)^2&=1-b-a^np[1-p(1-a^{2n})]^{-1}b^n(1-b)\\
&=1-b-a^np[1-p(1-a^{2n})]^{-1}(1-a)b^n\\
&=1-b-a^n(1-a)p[1-p(1-a^{2n})]^{-1}b^n\\
&=1-b-(1-b)a^np[1-p(1-a^{2n})]^{-1}b^n\\
&=(1-b)y(1-b).
\end{aligned}
$$
(ii) Since $p$ commutes with $a$, we have
$$(a^nxb^n)a^np[1-p(1-a^{2n})]^{-1}b^n=a^nxpa^{2n}[1-p(1-a^{2n})]^{-1}b^n=0.$$
Thus
$$\begin{aligned}&~~~~y^2(1-b)-y\\
&=ya^np[1-p(1-a^{2n})]^{-1}b^n\\
&=(a^n[1-p(1-a^{2n})]^{-1}pb^n)(1+\sum\limits_{i=1}^{2n-1}b^i)a^np[1-p(1-a^{2n})]^{-1}b^n-\\
&~~~~(1+\sum\limits_{i=1}^{2n-1}b^i)a^np[1-p(1-a^{2n})]^{-1}b^n\\
&=(1+\sum\limits_{i=1}^{2n-1}b^i)a^np[1-p(1-a^{2n})]^{-2}a^{2n}b^n-(1+\sum\limits_{i=1}^{2n-1}b^i)a^np[1-p(1-a^{2n})]^{-1}b^n\\
&=(1+\sum\limits_{i=1}^{2n-1}b^i)a^np[1-p(1-a^{2n})]^{-2}[pa^{2n}-p+p(1-a^{2n})]b^n\\
&=0.
\end{aligned}
$$
(iii) From the proof of part (i) of, it follows that
$$(1-b)y(1-b)-(1-b)=a^n(1-a)p[1-p(1-a^{2n})]^{-1}b^n.$$
Therefore
$$\begin{aligned}&~~~~r[(1-b)y(1-b)-(1-b)]=r(a^n(1-a)p[1-p(1-a^{2n})]^{-1}b^n)\\
&\leq r([1-p(1-a^{2n})]^{-1}b^na^n(1-a)p)=r([1-p(1-a^{2n})]^{-1}a^{2n})r((1-a)p)=0,
\end{aligned}
$$
which implies that $(1-b)y(1-b)-(1-b)$ is quasi-nilpotent. The proof of the opposite implication is parallel to that above.
\end{proof}

\begin{remark} \emph{In recent paper [\cite{Peng-Zhang}], Peng and Zhang established the formula for (generalized) Drazin inverses of $1-a$ and $1-b$ under the conditions $ab^n=b^{n+1}$ and $ba^n=a^{n+1}$. By our Proposition \ref{1.4}, Theorems \ref{4.2} and \ref{4.5}, we can derive new expressions for (generalized) Drazin inverses of $1-a$ and $1-b$, which differ from those of Peng and Zhang. Moreover, Theorems \ref{4.2} and \ref{4.5} lead to the following result: if $a,~b,~c,~d\in \mathcal{A}$ satisfy
\begin{gather}\label{eq3.2}
ac(db)^n=(db)^{n+1} \makebox{~~and~~} db(ac)^n=(ac)^{n+1}, \tag{3.2}
\end{gather}
 then $1-ac$ is one-sided (resp. generalized) Drazin inverse if and only if $1-bd$ is one-sided (resp. generalized) Drazin inverse. Although these conditions (\ref{eq3.2}) cover \textquotedblleft$acd=dbd$\textquotedblright~ and \textquotedblleft$dba=aca$\textquotedblright, our work in Section 3 provides different computational formulas of one-sided (generalized) Drazin inverses.}
\end{remark}

\begin{corollary} \label{4.7} Let $a, b\in \mathcal{A}$ satisfy $ab^n=b^{n+1}$ and $ba^n=a^{n+1}$ for some positive integer $n$. Then
$$\sigma_*(a)\setminus \{0\}=\sigma_*(b)\setminus \{0\},$$
where $*$=$\{$ld, rd, lg, rg, lgd, rgd$\}$.
\end{corollary}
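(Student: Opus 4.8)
The plan is to reduce all six spectral identities to the invertibility transfer results already established in Theorems \ref{4.2}, \ref{4.3} and \ref{4.5}, which handle precisely the cases $1-a \leftrightarrow 1-b$. Since those theorems only speak about the value $\lambda = 1$, the first step is a rescaling trick to reach an arbitrary nonzero spectral parameter. Fix $* \in \{\mathrm{ld}, \mathrm{rd}, \mathrm{lg}, \mathrm{rg}, \mathrm{lgd}, \mathrm{rgd}\}$ and a scalar $\lambda \in \mathbb{C} \setminus \{0\}$; it suffices to prove that $\lambda - a$ enjoys the corresponding one-sided (generalized) Drazin property if and only if $\lambda - b$ does.

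Second, I would set $a_\lambda := \lambda^{-1} a$ and $b_\lambda := \lambda^{-1} b$ and check that the intertwining hypotheses survive the rescaling: since $a_\lambda b_\lambda^n = \lambda^{-(n+1)} ab^n = \lambda^{-(n+1)} b^{n+1} = b_\lambda^{n+1}$ and likewise $b_\lambda a_\lambda^n = a_\lambda^{n+1}$, the pair $(a_\lambda, b_\lambda)$ again satisfies (\ref{eq4.1}). Consequently Theorems \ref{4.2}, \ref{4.3} and \ref{4.5} apply verbatim to $(a_\lambda, b_\lambda)$ and yield that $1 - a_\lambda$ has the property $*$ if and only if $1 - b_\lambda$ does.

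The remaining link is the observation that $\lambda - a = \lambda(1 - a_\lambda)$ together with the scalar-invariance of each of the six notions: for $\lambda \neq 0$ an element $z$ is left (resp. right) Drazin invertible, left (resp. right) group invertible, or left (resp. right) generalized Drazin invertible exactly when $\lambda z$ is, with the Drazin index unchanged. For the Drazin and group cases this is immediate by replacing a one-sided inverse $x$ with $\lambda^{-1} x$ in the defining identities; for the generalized cases one additionally uses that $\mathcal{A}^{qnil} = \{z : r(z) = 0\}$ is stable under nonzero scalar multiplication, because $r(\lambda z) = |\lambda| r(z)$. Chaining the three equivalences
$$\lambda - a \text{ has } * \iff 1 - a_\lambda \text{ has } * \iff 1 - b_\lambda \text{ has } * \iff \lambda - b \text{ has } *$$
gives $\lambda \notin \sigma_*(a) \iff \lambda \notin \sigma_*(b)$ for every $\lambda \neq 0$, which is exactly $\sigma_*(a) \setminus \{0\} = \sigma_*(b) \setminus \{0\}$.

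I do not expect a genuine obstacle here, as the statement is a formal corollary; the only points requiring a little care are verifying that the homogeneous degree of the intertwining relations (\ref{eq4.1}) makes them scale-invariant and confirming that quasi-nilpotency is preserved under rescaling, which secures the two generalized-Drazin cases.
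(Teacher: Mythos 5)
Your proposal is correct and follows essentially the same route the paper intends: Corollary~\ref{4.7} is stated as an immediate consequence of Theorems~\ref{4.2}, \ref{4.3} and~\ref{4.5}, and the reduction you spell out---rescaling $a_\lambda=\lambda^{-1}a$, $b_\lambda=\lambda^{-1}b$ (which preserves the intertwining relations by homogeneity) combined with the scalar-invariance of each one-sided (generalized) Drazin notion via $x\mapsto\lambda^{-1}x$ and $r(\lambda z)=|\lambda|r(z)$---is exactly the implicit argument needed to pass from the $\lambda=1$ statements to the full spectral identity. There are no gaps; you have simply made explicit the details the paper omits.
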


\section{Further results on operators in Banach spaces}

In this section, $\mathcal{B}(X)$ denotes the set of all the bounded linear operators acting on Banach space $X$. Let $T\in \mathcal{B}(X)$, $\mathcal{N}(T)$ and $\mathcal{R}(T)$ denote null space and range of $T$ respectively. The following B-Fredholm theory is a generalization of the classical Fredholm theory. We define the set
$$\bigtriangleup(T):=\{n\in\mathbb{N}:~ \makebox{for~all~}m\geq n,~
\mathcal{N}(T)\cap\mathcal{R}(T^n)=\mathcal{N}(T)\cap\mathcal{R}(T^m)\}.$$
Then, the degree of \textit{stable iteration} $\bigtriangleup(T)$ is defined as dis$(T):$=inf$\bigtriangleup(T)$ (with dis$(T)=\infty$ if $\bigtriangleup(T)=\emptyset$). An operator $T\in \mathcal{B}(X)$ is termed \textit{B-Fredholm} (or \textit{upper semi B-Fredholm}, \textit{lower semi B-Fredholm}), if there exits an integer $n$ such that $\mathcal{R}(T^n)$ is closed and $T|_{\mathcal{R}(T^n)}$ is Fredholm (or \textit{upper semi Fredholm}, \textit{lower semi Fredholm}). The \textit{index} of a B-Fredholm operator $T$ is given by ind$(T)$:=ind$(T|_{\mathcal{R}(T^n)})$, i.e.
$$\makebox{ind}(T)=\makebox{dim}[\mathcal{N}(T)\cap \mathcal{R}(T^n)]-\makebox{dim}X/[\mathcal{R}(T)+\mathcal{N}(T^n)].$$
 For further insights into B-Fredholm theory, readers are encouraged to consult the noteworthy works [\cite{AB1}, \cite{Berkani1}, \cite{Berkani2}, \cite{Berkani3}]. Recall that $T\in\mathcal{B}(X)$ is termed to be \textit{left B-Fredholm} (or \textit{left essentially Drazin invertible}) if $d:=$dis$(T)$ is finite, $\mathcal{N}(T)\cap\mathcal{R}(T^d)$ is finite dimension and $\mathcal{R}(T)+\mathcal{N}(T^d)$ is topologically complemented in $X$. Dually, $T\in \mathcal{B}(X)$ is termed \textit{right B-Fredholm} (or \textit{right essentially Drazin invertible}) if $d:=$dis$(T)$ is finite, $\mathcal{R}(T)+\mathcal{N}(T^d)$ is closed of finite codimension and $\mathcal{N}(T)\cap\mathcal{R}(T^d)$ is topologically complemented in $X$. These one-sided B-Fredholm operators were recently introduced by Ghorbel, \v{Z}ivkovi\'{c}-Zlatanovi\'{c} and Djordjevi\'{c} in [\cite{Ghorbel}, \cite{ZZ}]. In [\cite{Caradus}], Atkinson type Theorems for one-sided Fredholm operators are presented. By [\cite{AB}, Theorems 3.6 and 3.7], Hamdan and Berkani obtain the Atkinson type Theorems for one-sided B-Fredholm operators. Now let $\pi: B(X)\rightarrow B(X)/F(X)$ be the canonical map, where $F(X)$ denotes the set of all the finite rank operators in the Banach space $X$.

\begin{theorem} \emph{([\cite{AB}])} \label{5.1} Let $T\in \mathcal{B}(X)$. Then the following statements hold.

\emph{(i)} $T$ is left B-Fredholm if and only if $\pi(T)$ is left Drazin invertible.

\emph{(ii)} $T$ is right B-Fredholm  if and only if $\pi(T)$ is right Drazin invertible.
\end{theorem}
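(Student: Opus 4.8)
The plan is to prove Theorem~\ref{5.1} by reducing the one-sided B-Fredholm conditions on $T$ to the left (resp.\ right) strongly $\pi$-regularity of $\pi(T)$ in the Calkin algebra $\mathcal{B}(X)/F(X)$, and then invoke our ``Azumaya realization'' (Theorem~\ref{1}) to pass to left (resp.\ right) Drazin invertibility of $\pi(T)$. Since by Theorem~\ref{1} the notions of left strongly $\pi$-regularity and left Drazin invertibility coincide in any ring, it suffices to characterize left B-Fredholmness of $T$ by the existence of an element $S\in\mathcal{B}(X)$ and an integer $p$ with $\pi(T)\pi(S)\pi(T)=\pi(S)\pi(T)^2$ and $\pi(S)\pi(T)^{p+1}=\pi(T)^p$ in the quotient. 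I would treat only case~(i), as~(ii) is dual.

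First I would unpack the definition: $T$ is left B-Fredholm means $d:=\mathrm{dis}(T)<\infty$, the space $\mathcal{N}(T)\cap\mathcal{R}(T^d)$ is finite dimensional, and $\mathcal{R}(T)+\mathcal{N}(T^d)$ is topologically complemented in $X$. The key structural input is the Atkinson-type theorem for one-sided B-Fredholm operators proved by Hamdan and Berkani in [\cite{AB}, Theorems 3.6 and 3.7], which I would cite to obtain an operator decomposition: for a suitable $n$, $\mathcal{R}(T^n)$ is closed and the restriction $T|_{\mathcal{R}(T^n)}$ is left Fredholm (upper semi-Fredholm with complemented range). The classical Atkinson theorem for one-sided Fredholm operators (see [\cite{Caradus}]) then says that $T|_{\mathcal{R}(T^n)}$ is left invertible modulo finite rank operators, i.e.\ there is a left parametrix. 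The second step is to lift this parametrix on the stable range back to a global relation modulo $F(X)$, producing an $S$ with the two defining equations of left strong $\pi$-regularity in the Calkin algebra; here the stable-iteration hypothesis guarantees the intertwining $\pi(T)\pi(S)\pi(T)=\pi(S)\pi(T)^2$ holds because $\mathcal{N}(T)\cap\mathcal{R}(T^m)$ stabilizes for $m\ge d$.

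For the converse, I would assume $\pi(T)$ is left Drazin invertible in $\mathcal{B}(X)/F(X)$, so by Theorem~\ref{1} it is left strongly $\pi$-regular: there exist $S$ and $p$ with the two quotient equations. Reading these equations back in $\mathcal{B}(X)$, the second relation $\pi(S)\pi(T)^{p+1}=\pi(T)^p$ forces $T^p$ to differ from $ST^{p+1}$ by a finite rank operator, which yields that $\mathcal{R}(T^p)$ is closed modulo a finite-dimensional perturbation and that $T$ restricted to the stable range is left Fredholm; combined with the first (intertwining) relation, which controls the stabilization of the kernels, this recovers finiteness of $\mathrm{dis}(T)$ together with the finite dimensionality and complementation conditions defining left B-Fredholmness. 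This direction is essentially the translation of the algebraic identities into the operator-theoretic statements via the Atkinson theorem run in reverse.

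\textbf{The main obstacle} I anticipate is the bookkeeping at the level of the stable iteration: matching the integer $d=\mathrm{dis}(T)$ appearing in the analytic definition with the index $p$ produced by left strong $\pi$-regularity, and verifying that the purely algebraic intertwining relation $\pi(T)\pi(S)\pi(T)=\pi(S)\pi(T)^2$ in the Calkin algebra genuinely encodes the geometric stabilization $\mathcal{N}(T)\cap\mathcal{R}(T^n)=\mathcal{N}(T)\cap\mathcal{R}(T^m)$ for $m\ge n$. Because Theorem~\ref{5.1} is attributed to [\cite{AB}] in the excerpt, I expect the cleanest route is to lean directly on the Hamdan--Berkani Atkinson-type theorems for both implications and to present the proof as a short dictionary between their operator conditions and our algebraic definition of one-sided Drazin invertibility, rather than re-deriving the Fredholm-theoretic decompositions from scratch.
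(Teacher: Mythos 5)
The first thing to note is that the paper does not prove Theorem \ref{5.1} at all: it is imported verbatim from Hamdan and Berkani --- the attribution ``([\cite{AB}])'' means that Theorem \ref{5.1}(i)--(ii) \emph{is} [\cite{AB}, Theorems 3.6 and 3.7], the ``Atkinson type Theorems for one-sided B-Fredholm operators'' referred to in the paragraph immediately preceding the statement. This exposes the central defect of your proposal: your declared key structural input is [\cite{AB}, Theorems 3.6 and 3.7], which you read as an operator decomposition ($\mathcal{R}(T^n)$ closed and $T|_{\mathcal{R}(T^n)}$ left Fredholm). But an Atkinson-type theorem is, by its very nature, a characterization of a Fredholm-type property of $T$ by an invertibility-type property of $\pi(T)$ in a quotient algebra; here those two theorems state exactly the equivalence you are asked to prove. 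Your argument is therefore circular --- it cites the statement in order to prove the statement --- and your closing remark, that the cleanest route is to ``lean directly on the Hamdan--Berkani Atkinson-type theorems for both implications,'' makes the circularity explicit rather than repairing it.

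Setting the citation aside, what remains of your sketch is precisely the hard content, and it is only gestured at. In the forward direction you must actually construct, from $d=\mathrm{dis}(T)<\infty$, $\dim\bigl(\mathcal{N}(T)\cap\mathcal{R}(T^d)\bigr)<\infty$ and the topological complementation of $\mathcal{R}(T)+\mathcal{N}(T^d)$, a concrete $S\in\mathcal{B}(X)$ satisfying $\pi(T)\pi(S)\pi(T)=\pi(S)\pi(T)^2$ and $\pi(S)\pi(T)^{p+1}=\pi(T)^p$; the phrase ``lift this parametrix back to a global relation modulo $F(X)$'' names the goal, not a construction, and the intertwining identity does not follow merely from the stabilization of $\mathcal{N}(T)\cap\mathcal{R}(T^m)$ --- it is a constraint on how $S$ is built, not on $T$. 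In the converse direction, extracting closed ranges, finiteness of $\mathrm{dis}(T)$, finite dimensionality, and complementation from identities that hold only modulo finite-rank operators is exactly the delicate part of [\cite{AB}], and ``run the Atkinson theorem in reverse'' does not supply it. The one sound, non-circular ingredient in your plan is the appeal to Theorem \ref{1}: since $\mathcal{B}(X)/F(X)$ is only a ring (it is not a Banach algebra, $F(X)$ being non-closed), the purely algebraic equivalence of left strong $\pi$-regularity with left Drazin invertibility is indeed the correct bridge between the invertibility notion used in [\cite{AB}] and this paper's definition; but that bridge is the easy part, and the operator-theoretic terrain on both sides of it is missing from your proposal.
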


It is noteworthy that all the results concerning one-sided Drazin inverses in this article are valid in the context of rings with unit (not only in Banach algebras). Now, for $T\in \mathcal{B}(X)$, we define left and right B-Fredholm spectra as follows
$$\sigma_{BF}(T):=\{\lambda\in \mathbb{C}: \lambda-T \makebox{~is~not~B-Fredholm}\},$$
$$\sigma_{LBF}(T):=\{\lambda\in \mathbb{C}: \lambda-T \makebox{~is~not~left~B-Fredholm}\},$$
and
$$\sigma_{RBF}(T):=\{\lambda\in \mathbb{C}: \lambda-T \makebox{~is~not~right~B-Fredholm}\},$$
respectively. By Corollaries \ref{310} and \ref{5.1}, we immediately obtain the following results corresponding to left and right B-Fredholm spectra. These can be regarded as generalizations of classical results concerning spectra of left and right Fredholm or left and right essentially invertible operators, see page 20 of [\cite{Caradus}].

\begin{corollary} \label{5.2} Let $T, S\in \mathcal{B}(X)$. Then
$$\sigma_{*}(TS)\setminus\{0\}=\sigma_{*}(ST)\setminus\{0\},$$
where $*$=$\{$LBF,~RBF,~BF$\}$.
\end{corollary}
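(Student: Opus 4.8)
The plan is to lift the entire problem into the quotient algebra $\mathcal{A}=\mathcal{B}(X)/F(X)$ through the canonical homomorphism $\pi$, using Theorem \ref{5.1} to trade one-sided B-Fredholmness for one-sided Drazin invertibility, and then to apply the spectral identity already obtained in Corollary \ref{3.11}. Once this dictionary is in place the argument is purely formal.

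First I would set up the translation of spectra. For every $\lambda\in\mathbb{C}$ one has $\pi(\lambda-T)=\lambda-\pi(T)$, so by Theorem \ref{5.1}(i) the operator $\lambda-T$ fails to be left B-Fredholm exactly when $\lambda-\pi(T)$ fails to be left Drazin invertible in $\mathcal{A}$; hence $\sigma_{LBF}(T)=\sigma_{ld}(\pi(T))$, and dually $\sigma_{RBF}(T)=\sigma_{rd}(\pi(T))$ by Theorem \ref{5.1}(ii). Since $\pi$ is a ring homomorphism, $\pi(TS)=\pi(T)\pi(S)$ and $\pi(ST)=\pi(S)\pi(T)$.

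Next I would apply Corollary \ref{3.11} with $a=\pi(T)$ and $c=\pi(S)$ in $\mathcal{A}$, which yields $\sigma_{ld}(\pi(T)\pi(S))\setminus\{0\}=\sigma_{ld}(\pi(S)\pi(T))\setminus\{0\}$ together with its right-handed analogue. Combining this with the two identifications of the previous step gives the LBF and RBF cases,
$$\sigma_{LBF}(TS)\setminus\{0\}=\sigma_{LBF}(ST)\setminus\{0\}\quad\text{and}\quad\sigma_{RBF}(TS)\setminus\{0\}=\sigma_{RBF}(ST)\setminus\{0\}.$$
For the two-sided case I would use the standard fact that $T$ is B-Fredholm if and only if it is both left and right B-Fredholm --- consistent, via Theorem \ref{5.1} and Proposition \ref{1.4}(i), with $\pi(T)$ being Drazin invertible --- so that $\sigma_{BF}=\sigma_{LBF}\cup\sigma_{RBF}$; deleting $0$ and taking unions then reduces the BF identity to the two one-sided identities already proved.

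I do not expect a genuine obstacle, since no fresh computation is required. The only point demanding care is the legitimacy of invoking Corollary \ref{3.11} inside $\mathcal{A}=\mathcal{B}(X)/F(X)$, which is merely a normed algebra rather than a Banach algebra. This is harmless because the spectral identity ultimately rests on Corollary \ref{310}, whose one-sided Drazin statements hold in any unital ring, and the passage to the nonzero spectrum uses only the scalar rescaling $\lambda-ac=\lambda\bigl(1-(\lambda^{-1}a)c\bigr)$ for $\lambda\neq 0$, which preserves one-sided Drazin invertibility.
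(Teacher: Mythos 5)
Your proposal is correct and follows essentially the same route as the paper: the paper's one-line proof likewise combines the Atkinson-type theorem (Theorem \ref{5.1}) with the one-sided Jacobson lemma (Corollary \ref{310}, equivalently Corollary \ref{3.11}), and the paper's preceding remark that ``all the results concerning one-sided Drazin inverses in this article are valid in the context of rings with unit'' is exactly the justification you supply for working in the merely normed algebra $\mathcal{B}(X)/F(X)$. Your explicit treatment of the two-sided BF case via $\sigma_{BF}=\sigma_{LBF}\cup\sigma_{RBF}$ (through Proposition \ref{1.4}(i)) is a detail the paper leaves implicit, but it is the intended reading.
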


Moreover, by Corollary \ref{5.1} and Theorem \ref{4.2}, we deduce the subsequent correlation between the B-Fredholm spectra of two intertwining operators.

\begin{corollary} \label{5.3} Let $T, S\in \mathcal{B}(X)$ satisfy $ST^n=T^{n+1}$ and $TS^n=S^{n+1}$. Then
$$\sigma_{*}(T)\setminus\{0\}=\sigma_{*}(S)\setminus\{0\},$$
where $*$=$\{$LBF,~RBF,~BF$\}$.
\end{corollary}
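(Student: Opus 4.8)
The plan is to transfer everything into the quotient ring $\mathcal{C}:=\mathcal{B}(X)/F(X)$ via the canonical homomorphism $\pi$, where the one-sided B-Fredholm conditions become one-sided Drazin conditions by Theorem \ref{5.1}, and then to invoke the ring-valid version of the spectral identity from Section \ref{sec4}. First I would record that $\pi$ is a unital ring homomorphism, so that applying it to the hypotheses $ST^n=T^{n+1}$ and $TS^n=S^{n+1}$ yields $\pi(S)\pi(T)^n=\pi(T)^{n+1}$ and $\pi(T)\pi(S)^n=\pi(S)^{n+1}$. Hence, writing $a:=\pi(S)$ and $b:=\pi(T)$, the pair $(a,b)$ satisfies the intertwining conditions \emph{(\ref{eq4.1})} in $\mathcal{C}$ for the same integer $n$.

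Next I would use Theorem \ref{5.1} to identify the spectra. For every $\lambda\in\mathbb{C}$ and every $T\in\mathcal{B}(X)$, the operator $\lambda-T$ is left B-Fredholm precisely when $\pi(\lambda-T)=\lambda-\pi(T)$ is left Drazin invertible in $\mathcal{C}$, so that $\sigma_{LBF}(T)=\sigma_{ld}(\pi(T))$ and, dually, $\sigma_{RBF}(T)=\sigma_{rd}(\pi(T))$, where $\sigma_{ld},\sigma_{rd}$ are now computed in $\mathcal{C}$. The same identities hold with $S$ in place of $T$. It therefore suffices to prove $\sigma_{ld}(a)\setminus\{0\}=\sigma_{ld}(b)\setminus\{0\}$ and $\sigma_{rd}(a)\setminus\{0\}=\sigma_{rd}(b)\setminus\{0\}$ in $\mathcal{C}$.

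For this I would appeal to the ring version of Theorem \ref{4.2} (equivalently, Corollary \ref{4.7} restricted to $\ast\in\{\mathrm{ld},\mathrm{rd}\}$). Concretely, for $\lambda\neq 0$ the rescaled elements $a/\lambda$ and $b/\lambda$ again satisfy \emph{(\ref{eq4.1})}, so Theorem \ref{4.2} shows that $1-a/\lambda$ is left (resp. right) Drazin invertible if and only if $1-b/\lambda$ is; multiplying by the nonzero scalar $\lambda$ then gives that $\lambda-a$ is left (resp. right) Drazin invertible if and only if $\lambda-b$ is. This yields exactly the two one-sided spectral identities above, hence the LBF and RBF cases. For the BF case, $T$ is B-Fredholm if and only if $\pi(T)$ is both left and right Drazin invertible, i.e. (Proposition \ref{1.4}(i)) Drazin invertible, so that $\sigma_{BF}(T)=\sigma_{LBF}(T)\cup\sigma_{RBF}(T)$; taking the union of the two identities already obtained finishes the proof.

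The only point requiring care---and the main obstacle---is the legitimacy of applying Theorem \ref{4.2} inside $\mathcal{C}=\mathcal{B}(X)/F(X)$, which is a unital ring but \emph{not} a Banach algebra. This is exactly the content of the remark preceding the corollary: the formulas in Theorem \ref{4.2} for the ordinary (non-generalized) one-sided Drazin inverse involve only ring-theoretic operations---the explicit $r,r',p,p'$ are polynomial expressions in $a,b$ and the inner inverses---so no norm, completeness, or quasi-nilpotency argument is invoked, and the construction goes through verbatim in any unital ring. By contrast, the generalized cases $\mathrm{lgd},\mathrm{rgd}$ of Corollary \ref{4.7} would genuinely require the Banach-algebra structure and are unavailable here, which is precisely why the statement is restricted to $\{\mathrm{LBF},\mathrm{RBF},\mathrm{BF}\}$.
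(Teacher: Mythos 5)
Your proposal is correct and takes essentially the same route as the paper, which derives the corollary precisely by combining Theorem \ref{5.1} with Theorem \ref{4.2} (plus the rescaling argument underlying Corollary \ref{4.7} and Proposition \ref{1.4}(i) for the two-sided case). Your explicit justification that Theorem \ref{4.2} remains valid in the unital ring $\mathcal{B}(X)/F(X)$, which is not a Banach algebra, is exactly the point the paper itself flags in the remark preceding the definition of the B-Fredholm spectra, so the proposal fills in the details the paper leaves implicit rather than diverging from it.
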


\noindent {\Large\textbf{Acknowledgements}}
\\
\\
\\
\\
{\Large\textbf{Data Availability Statement}}
\\
\\
No dataset was generated or analyzed during this study.
\\
\\
{\Large\textbf{Declarations}}
\\
\\
\textbf{Conflict of interest} The author declares that they have no known competing financial interests or personal relationships that could have appeared to influence the work reported in this paper. Furthermore, the author pledges to adhere to academic integrity and ethical standards in the research.




\end{document}